\newcommand{\mythmstyle}{plain}
\newcommand{\mydefstyle}{definition}
\newcommand{\lapl}[2][{}]{\Delta_{{#2}}^{{#1}}} 
\newcommand{\lessWithNumber}[1]{\stackrel{#1}\preccurlyeq}
\NewDocumentCommand{\less}{o}{%
  \IfNoValueTF{#1}
    {\preccurlyeq}
    {\lessWithNumber{#1}}%
}
      \pgfpathrectanglecorners{\pgfpointorigin}{\pgfpoint{3cm}{3cm}}%
  \newcommand{\look}[1]{}
  \newcommand{\lookO}[1]{}%
  \newcommand{\lookF}[1]{}%
  \newcommand{\lookJ}[1]{}%
  \newcommand{\markerO}{\fbox{\rule{0pt}{0.1ex}\textbf{Olaf}}}
  \newcommand{\markerF}{\fbox{\rule{0pt}{0.1ex}\textbf{Fernando}}}
  \newcommand{\markerJ}{\fbox{\rule{0pt}{0.1ex}\textbf{John}}}
  \newcommand{\look}[1]{\markerO \textbf{*}
    \footnote{ #1 }}
  \newcommand{\lookO}[1]{\markerO\textbf{*}
    \footnote{\textbf{Olaf:} #1 }}
  \newcommand{\lookF}[1]{\markerF\textbf{*}
    \footnote{\textbf{Fernando:} #1 }}
  \newcommand{\lookJ}[1]{\markerJ\textbf{*}
    \footnote{\textbf{John:} #1 }}
  \newcommand{\draftInfo}{{\hfill \tiny \ \today, \currenttime, \emph{File:}  \texttt{\jobname.tex \quad}}}
 \let\myTitle\title
 \renewcommand*{\title}[1]{\myTitle{#1 \newline\draftInfo}}
\newcommand{\quadtext}[1]{\quad\text{#1}\quad}
\newcommand{\qquadtext}[1]{\qquad\text{#1}\qquad}
\newcommand{\itemref}[1]{\noindent(\ref{#1})}
\numberwithin{equation}{section}
\theoremstyle{\mythmstyle}       
\newtheorem{theorem}{Theorem}[section]
\newtheorem*{theorem*}{Theorem}
\newtheorem{proposition}[theorem]{Proposition}
\newtheorem{lemma}[theorem]{Lemma}
\newtheorem{corollary}[theorem]{Corollary}
\theoremstyle{\mydefstyle}        
\newtheorem{definition}[theorem]{Definition}
\newtheorem{example}[theorem]{Example}
\newtheorem{remark}[theorem]{Remark}
\newtheorem*{remark*}{Remark}
\newtheorem*{commentolaf*}{Comment Olaf}
\newcommand{\Sec}[1]{Section~\ref{sec:#1}}
\newcommand{\Subsec}[1]{Subsection~\ref{subsec:#1}}
\newcommand{\Fig}[1]{Figure~\ref{fig:#1}}
\newcommand{\Thm}[1]{Theorem~\ref{thm:#1}}
\newcommand{\Lemenum}[2]{Lemma~\ref{lem:#1}~(\ref{#2})}
\newcommand{\Cor}[1]{Corollary~\ref{cor:#1}}
\newcommand{\Prp}[1]{Proposition~\ref{prp:#1}}
\newcommand{\Rem}[1]{Remark~\ref{rem:#1}}
\newcommand{\Remenum}[2]{Remark~\ref{rem:#1}~(\ref{#2})}
\newcommand{\Def}[1]{Definition~\ref{def:#1}}
\newcommand{\abs}[2][{}]{\lvert{#2}\rvert_{{#1}}}    
\newcommand{\abssqr}[2][{}]{\lvert{#2}\rvert^2_{#1}} 
\newcommand{\normsymb}{\|}
\newcommand{\normsqr}[2][{}]{\normsymb{#2}\normsymb^2_{#1}} 
\newcommand{\set}[2]{\{ \, #1 \, | \, #2 \, \} }      
\newcommand{\Bigset}[2]{\Bigl\{ \, #1 \, \Bigl|\Bigr. \, #2 \, \Bigr\} }
  \DeclareSymbolFont{stix@largesymbols}{LS2}{stixex}{m}{n}
  \DeclareMathDelimiter{\lMset}{\mathopen} {stix@largesymbols}{"E8}%
                                            {stix@largesymbols}{"0E}
  \DeclareMathDelimiter{\rMset}{\mathclose}{stix@largesymbols}{"E9}%
                                            {stix@largesymbols}{"0F}
\renewcommand{\lMset}{\lbrace}
\renewcommand{\rMset}{\rbrace}
\newcommand{\map}[3]{ #1 \colon #2 \longrightarrow #3}    
\newcommand{\card}[1]{\lvert#1\rvert}   
\DeclareMathOperator{\tr}     {tr}  
\newcommand{\specsymb} {\sigma} 
\newcommand{\spec}[2][{}]   {\specsymb_{\mathrm{#1}}(#2)}
\renewcommand{\phi}{\varphi}   
\newcommand{\R}{\mathbb{R}} 
\newcommand{\N}{\mathbb{N}} 
\newcommand{\wt}{\widetilde}           
\newcommand{\wh}{\widehat}          
\newcommand{\lsymb}    {\ell}          
\newcommand{\lpspace}[1][p]    {\lsymb_{#1}}     
\newcommand{\lsqrspace}    {\lpspace[2]}          
\newcommand{\lsqr}[2][{}]{\lsqrspace^{#1}({#2})}   
\title{Isospectral graphs via spectral bracketing}%
\author{John Stewart Fabila-Carrasco} %
\address{School of Engineering, Institute for Digital Communications,
  University of Edinburgh, Edinburgh EH9 3FB, U.K. }
\email{John.Fabila@ed.ac.uk}
\author{Fernando Lled\'o} %
\address{Department of Mathematics, University Carlos III de Madrid,
  Avda. de la Universidad 30, 28911 Legan\'es (Madrid), Spain and
  Instituto de Ciencias Matem\'aticas (CSIC-UAM-UC3M-UCM), Madrid}
\email{flledo@math.uc3m.es}
\author{Olaf Post} %
\address{Fachbereich 4 -- Mathematik, Universit\"at Trier, 54286
  Trier, Germany} \email{olaf.post@uni-trier.de}
\date{\today}
\thanks{JSFC was supported by the Leverhulme Trust via a Research
  Project Grant (RPG-2020-158). FLl was supported by the Severo
  Ochoa Programme for Centres of Excellence in R\&D (SEV-2015-0554)
  and from the Spanish National Research Council, through the
  \textit{Ayuda extraordinaria a Centros de Excelencia Severo Ochoa}
  (20205CEX001) and by the Madrid Government under the Agreement with UC3M in the line of \emph{Research Funds for Beatriz Galindo Fellowships} (C\&QIG-BG-CM-UC3M), and in the context of the V PRICIT}
\keywords{discrete normalised Laplacian, isospectral graphs, spectral
  bracketing} \subjclass[2010]{05C50, 05C76, 47B39, 47A10}
\begin{document}
\begin{abstract}
  In this article, we develop a perturbative technique to construct
  families of non-isomorphic discrete graphs which are isospectral for
  the standard (also called normalised) Laplacian and its signless
  version. We use vertex contractions as a graph perturbation and
  spectral bracketing with auxiliary graphs which have certain
  eigenvalues with high multiplicity. There is no need to know
  explicitly the eigenvalues or eigenfunctions of the corresponding
  graphs. We illustrate the method by presenting several families of
  examples of isospectral graphs including fuzzy complete bipartite
  graphs and subdivision graphs obtained from the previous examples.
  All the examples constructed turn out to be also isospectral for the
  standard (Kirchhoff) Laplacian on the associated equilateral metric
  graph.
\end{abstract}


\maketitle

%
%
\section{Introduction}
\label{sec:intro}
%

Perturbative techniques in the study of linear Hilbert space operators
(or matrices) have a long and fertile history (see,
e.g.~\cite{kato:95,baumg:85}). From a general point of view, they
provide a method of investigation based
on the knowledge of the spectrum of a free (non-perturbed) operator
(typically a Laplacian on some domain of $\R^n$ or on a graph) and the
analysis of its stability or variation under a controlled perturbation
expressed in terms of a potential or a geometrical manipulation of the
underlying spaces (see, e.g.~\cite{so:99,lledo-post:07}).
In~\cite{fclp:22a} we presented a systematic study of the spectral
effects of graph perturbations (like edge deletion, vertex
contraction, vertex virtualisation etc.) for arbitrary weights and
including arbitrary magnetic potentials. These effects can be
quantified in terms of a natural spectral preorder. In this article,
we present a perturbative method leading to the construction of
isospectral graphs which are interpreted as perturbations (via vertex
merging) of auxiliary graphs which have certain eigenvalues with high
enough multiplicity.

Spectral graph theory studies the relationship between the structure
of a graph and the spectrum of operators naturally associated with
the graph which are usually represented by matrices with respect to a
suitable orthonormal basis.  In this article, we consider only simple
finite graphs, i.e., there are no multiple edges nor loops.
For a graph $G=(V,E)$, one can define different natural operators
associated with it.  Typically, one considers the \emph{adjacency
  matrix} $A_G$ with respect to a numbering of the vertices
$V=\{v_1,\dots,v_n\}$ having entries $1$ and $0$ depending on whether
there is an edge between two vertices or not.  The \emph{combinatorial
  Laplacian} is given by $D_G-A_G$, where $D_G$ is the diagonal matrix
with the degree of each vertex on the diagonal.  The \emph{signless
  combinatorial Laplacian} is defined by $D_G+A_G$.  Another well
known choice is the \emph{standard Laplacian} (also called
\emph{geometric} or \emph{normalised} in the literature,
cf. \Remenum{DML}{std.vs.normalised}), represented by
\begin{equation*}
  \lapl G
  = D_G^{-1/2}(D_G - A_G)D_G^{-1/2}
  = I_n - D_G^{-1/2}A_G D_G^{-1/2},
\end{equation*}
where $I_n$ is the identity $(n\times n)$-matrix (see
also~\cite{chung:97} for additional references and motivation).  The
\emph{signless standard Laplacian} is defined as
$\lapl {G^+} = I_n + D_G^{-1/2}A_G D_G^{-1/2}$ (see also
\Subsec{mw.lapl}).  Both standard Laplacians have their spectrum
contained in the interval $[0,2]$.

An important question with many applications is whether the spectrum
of a certain operator on a graph determines the graph up to
isomorphism or not. Two non-isomorphic graphs having the same spectrum
for a certain operator are called \emph{isospectral graphs} with
respect to that operator.  Two non-isomorphic graphs such that their
adjacency matrices have the same spectrum are often referred to as
\emph{cospectral} cf.~e.g.~\cite{cddgt:88}, i.e., isospectral with
respect to the adjacency matrix in our notation. Often, classical
problems related to isospectrality are considered for the
\emph{combinatorial} Laplacian, see
e.g.~\cite{cds:95,brouwer-haemers:12,hae:03}. Much less is known for
the standard Laplacian, as most previous works concentrate on the
adjacency matrix or the combinatorial Laplacian.  We refer in this
article to \emph{isospectral graphs} as \emph{isospectral} with
respect to the \emph{standard Laplacian} (or its signless version).
In recent time, though, results and constructions of isospectral
graphs for the standard Laplacian have been considered by several
authors. In~\cite{butler-grout:11}, the authors construct from given
isospectral bipartite graphs (e.g.~unions of complete bipartite
graphs) new isospectral graphs.  The proof is based on a concrete
construction of eigenfunctions of the new graphs from the old ones and
checking that all eigenvalues are exhausted.  Examples where
isospectral graphs with \emph{different} number of edges are present
are constructed in~\cite{butler:15}, based on so-called \emph{twin
  vertices} and simultaneously scaling the edge and vertex
weights. Other recent references describing isospectral graphs for the
standard Laplacian are given in \cite{das:16,lorenzen:22,butler:10}.

In this article we present a new perturbative technique to construct
isospectral graphs of the standard Laplacian (including its signless
version).  This new philosophy is based on the elementary perturbation
of a graph and the control of the effect of this perturbation on some
of its eigenvalues. From known multiplicity of certain
eigenvalues of some auxiliary graphs (and \emph{no} knowledge of the
corresponding eigenfunctions), we construct isospectral graphs using
spectral bracketing, i.e., enclosing unknown eigenvalues between the
corresponding eigenvalues of the auxiliary graphs. Concretely, a
spectral bracketing of a graph $G$ with another graph $G'$ means
that the eigenvalues interlace, i.e., that for some $t\in\N$ the
relations
\begin{equation}
  \label{eq:ev.interlacing}
  \lambda_1(G') \le \lambda_1(G) \le \lambda_{1+t}(G'), \quad
  \lambda_2(G') \le \lambda_2(G) \le \lambda_{2+t}(G'), \quad
  \dots, \quad
  \lambda_n(G')\le \lambda_n(G) \le \lambda_{n+t}(G')
\end{equation}
hold, where $n=\card G$ and $n+t=\card{G'}$ are the orders of $G$
and $G'$, respectively. Symbolically, we denote this as
$G' \less G \less[t] G'$ for this eigenvalue interlacing, where
$t\in\N_0$ denotes the \emph{spectral shift} in the discrete label
counting the eigenvalues that fixes the bracketing, i.e.,
$\lambda_k(G)\in [\lambda_k(G'),\lambda_{k+t}(G')]$, $k=1,\dots,n$. If
$G$ now has an eigenvalue of multiplicity $\mu > t$, then $G'$ also
has this eigenvalue with multiplicity at least $\mu-t>0$. In
\Rem{spec.brak.visualised} we also give a diagrammatic visualisation
of the spectral bracketing method. See also \cite{chen:04} for an
interlacing result under edge deletion.

This technique is more efficient the smaller the spectral shift $t$
is, since then, the bracketing becomes tighter. Ideally one looks for
elementary perturbations of the graph that have $t=1$ in only one of
the spectral relations leading to the classical interlacing of
eigenvalues.  In this sense, the spectral shift quantifies the
\emph{spectral cost} of the perturbation. This cost depends much on
which weights are used on the graph. For example, an edge deletion has
a minimal spectral cost for combinatorial weights but not for standard
weights while a single vertex contraction is a convenient perturbation
of the graph with standard weights but not for combinatorial weights
(see \cite[Corollaries~4.2 and 4.7]{fclp:22a}).  For this reason we
will focus in this article on perturbations where the graph $\wt G$ is
obtained from $G$ by contracting vertices of $G$ such that the
difference of the number of vertices is given by
$t=\card{G}-\card{\wt G}$ and the spectral bracketing of this
operation is given by
\begin{equation*}
  G \less \wt G \less[t] G.
\end{equation*}
The task is now to find suitable auxiliary graphs $G$ having
eigenvalues with high multiplicity such that one can exhaustively
determine eigenvalues $\wt G$ by the respective bracketings. This can
be done, for example, if the different eigenvalue sets of the
auxiliary graphs are disjoint (and hence the corresponding eigenspaces
must be orthogonal). If two non-isomorphic graphs $\wt G_1$ or
$\wt G_2$ can be sandwiched by the same auxiliary graphs as above and
if all eigenvalues are exhausted, then they are isospectral by
\Thm{main.idea}.  In the final section we apply this method to
different classes of examples where one knows the spectrum of the
corresponding auxiliary graphs and, hence, one can also determine the
spectrum of the corresponding isospectral graphs.  But the
construction technique of isospectral graphs proposed in this article
\emph{only requires to know the multiplicity of certain eigenvalues}
of the auxiliary graphs.

This article is structured as follows: \Sec{graphs} contains a brief
introduction of the notation and results on graphs with standard
weights and their Laplacians.  In the following section the main tools
for the analysis are presented, the spectral bracketing of graphs and
vertex contraction as the main geometrical perturbation of the graph.
It is proved that vertex contraction has small spectral cost for the
standard weights.  \Thm{main.idea} is the main result in this section
and shows how the multiplicity of certain eigenvalues of the auxiliary
graphs may lead to the determination of the spectrum of the sandwiched
graph. This idea is exploited in \Sec{examples} to present different
families of isospectral and non-isomorphic graphs. These examples
include fuzzy ball and fuzzy complete graph families. Finally, new
examples obtained as subdivision graphs are shown to be isospectral
using directly the diagrammatic picture associated to different
spectral bracketings. All the examples of isospectral discrete graphs
can be naturally turned into equilateral metric graphs which, again,
are isospectral for the corresponding (unbounded) self-adjoint
standard Laplacian (cf.~\cite{berko-kuch:13}).

%
%
\section{Discrete graphs and their standard Laplacians}
\label{sec:graphs}
%

In this section, we introduce briefly the discrete structures and
operations needed for the construction for families of isospectral
graphs. We will only consider here discrete, simple and unoriented
finite graphs together with its Laplacian and signless Laplacian with
standard weights.  We refer to \cite{fclp:22a} for a general analysis
in the context of directed multigraphs with an additional magnetic
potential described as a $S^1$-valued function on the edges.

\subsection{Discrete graphs}
\label{subsec:disc.graphs}

A \emph{(simple) graph} $G$ consists of a finite \emph{vertex} set
$V$ and an adjacency relation on $V$ (i.e., a symmetric and
non-reflexive relation on $V$); two vertices being in this relation
are called \emph{adjacent} and $\{u,v\}$ is called an \emph{edge}
joining $u$ and $v$. The \emph{order} of a finite graph $G$ is the
cardinality of its vertex set which we denote by
$\card G:=\card{V}$. We define the neighbourhood of a vertex $v$ as
\begin{equation*}
  N_v := \set{u \in V}{\text{$u$ and $v$ are adjacent}}
\end{equation*}
and the \emph{degree} of $v$ is the number of vertices adjacent with
$v$ which we denote as
\begin{equation*}
  \deg v := \card {N_v}.
\end{equation*}
Since we focus in this article on Laplacians with standard weights we
exclude \emph{isolated} vertices (i.e., vertices of degree $0$ or,
alternatively, we assume $\deg_G v>0$ for all $v \in V$).  A vertex of
degree $1$ is called \emph{pendant}.  Let $G$ be a graph with $n$
vertices labelled as $v_1, v_2, \dots, v_n$ and such that
$\deg v_1 \le \deg v_2 \le \dots \le \deg v_n$.  The list
\begin{equation*}
  \deg G := \bigl( \deg v_1, \deg v_2,\dots, \deg v_n \bigr)
\end{equation*}
is called the \emph{degree list} of $G$ and provides a graph
invariant, i.e., if the degree sequence or the degree list of two
graphs differ, then the graphs cannot be isomorphic.  A graph $G$ is
\emph{connected} if for any two vertices $u,v$ there exists a path
from $u$ to $v$, i.e., there is a sequence of distinct vertices
$v_0,v_1,\dots, v_n$ such that $v_{i-1}$ and $v_i$ are adjacent for
$1\leq i\leq n$ and such that $u=v_0$ and $v=v_n$. If the explicit
dependence of the graph $G$ is needed, we write $V=V(G)$ or $\deg^G v$
etc.

A central operation needed in this article is the
\emph{contraction of vertices} (also the name \emph{gluing}
or \emph{merging} is used in the literature). Recall that
the choice of a subset of vertices $V_0\subset V$ naturally
specifies an equivalence relation of vertices. In this article
the result of contracting vertices must give, again, a simple graph.
We refer to Definition~2.1 and Remark~2.2 \cite{fclp:22a} for a general
description of contractions in the context of multiple directed graphs.

\newcommand{\simContr}{\sim}
\newcommand{\quotient}[2][]{#2/{\sim_{#1}}}
\begin{definition}[Contracting vertices]
  \label{def:contraction}
  Let $G$ be a graph with vertex set $V$, consider an equivalence
  relation $\simContr$ on the vertex set $V$ and denote by $[v]$ the
  class of vertices related with $v$.  The quotient graph
  $\quotient G$ is the graph with vertex set
  $\quotient V=\set{[v]}{v \in V}$ and keeping all edges.

  We say that $\quotient G$ is obtained from $G$ by \emph{contracting
    the vertices} (with respect to $\simContr$). Two vertices $[u]$
  and $[v]$ in $\quotient G$ are adjacent if $[u]\not= [v]$ and if
  some vertex in the class of $[v]$ is adjacent in $G$ to some vertex
  in the class of $[u]$.

  The \emph{shrinking number} $r$ of the equivalence relation
  $\simContr$ is defined by
  $r:=\card{G}-\card{\quotient G}=\card V-\card{\quotient V}$ and
  quantifies the reduction of vertices in the quotient graph.
\end{definition}

In this article we will only allow contractions that respect the
category of simple graphs.  In particular, the quotient graph must be
simple as well.  For example, adjacent vertices in $G$ can not be
contracted since the quotient graph is not allowed to have loops in
this article.

\subsection{Standard Laplacians}
\label{subsec:mw.lapl}

For a finite graph $G$, we associate the following
natural weighted Hilbert space
\begin{align*}
  \lsqr{V,\deg}
  :=\Bigset{\map f V \R}
    {\normsqr[\lsqr{V,\deg}] f
    = \sum_{v \in V} \abssqr{f(v)} \deg v}.
\end{align*}
The standard Laplacian is defined as follows (for an approach
using discrete exterior derivatives, more general weights and magnetic
potentials we refer to~\cite[Sec.~3]{fclp:22a} and references therein):

\begin{definition}[standard Laplacian and standard signless Laplacian]
  \label{def:DML}
  \begin{subequations}
    The \emph{standard (discrete) Laplacian} of a graph $G$ is the
    self-adjoint operator
    $\map{\lapl G}{\lsqr{V,\deg}}{\lsqr{V,\deg}}$ defined for
    $v \in V$ and $f \in \lsqr{V,\deg}$ by
    \begin{equation}
      \label{eq:DML}
      \bigl(\lapl G f \bigr)(v)
      = \frac1{\deg v} \sum_{u \in N_v}
      \bigl(f(v)-f(u)\bigr)
      = f(v)-\frac1{\deg v } \sum_{u \in N_v} f(u).
    \end{equation}
    If $G$ has order $n$, then we write the spectrum of the
    corresponding Laplacian $\lapl G$ as the list
    \begin{equation*}
      \spec G
      = \bigl(\lambda_1(G), \lambda_2(G), \dots ,\lambda_n(G)\bigr),
    \end{equation*}
    where the eigenvalues $\lambda_1(G), \dots, \lambda_n(G)$ are
    written in ascending order and repeated according to their
    multiplicities.

    Similarly, we define the \emph{signless standard Laplacian}
    $\lapl{G^+}$ for $v \in V$ and $f \in \lsqr{V,\deg}$
    by\footnote{Note that in~\cite{fclp:18}
      and~\cite[Sec.~5.3]{fclp:22a} we reserved the superscript $^+$
      to denote Dirichlet Laplacians. Since in this article we do not
      need the operation of vertex virtualisation we use the
      superscript $^+$ in a different way: the results in the last
      section can be written in a much more convenient way using $^-$
      for the Laplacian and $^-$ for the signless Laplacian.}
    \begin{equation}
      \label{eq:DML+}
      \bigl(\lapl{G^+} f \bigr)(v)
      = \frac1{\deg v} \sum_{u \in N_v}
      \bigl(f(v)+f(u)\bigr)
      = f(v)+\frac1{\deg v } \sum_{u \in N_v} f(u)
    \end{equation}
    Its eigenvalues are denoted by
    $\spec {G^+} = \bigl(\lambda_1(G^+), \lambda_2(G^+), \dots
    ,\lambda_n(G^+)\bigr)$.
  \end{subequations}
\end{definition}

We refer to \cite{lledo-post:08b} for a description of the unoriented
homology related to the signless Laplacian and to \cite[Sec.~2.4 and
Sec.~3.1]{fclp:22a} for the use of the magnetic potential as an
interpolation parameter connecting both Laplacians. More applications
of the magnetic potential to analyse combinatorial properties of the
graph are given in~\cite{fclp:22b}.

\begin{remark}
  \label{rem:DML}
  \indent
  \begin{enumerate}
  \item \label{std.vs.normalised} Some authors call the Laplacian with
    degree weight \emph{geometric}, as some results referring to this
    Laplacian are closer to the Laplacian on a manifold or a metric
    graph.  Sometimes, the standard Laplacian is also called
    \emph{normalised}, but we find that this name should better be
    reserved for edge weights $w_e$ with edge $e=\{u,v\}$ and
    associated vertex weights
    $\deg^w v := \sum_{u \in N_v} w_{\{u,v\}}$.
  \item
    \label{spec.0.2}
    Recall that the spectra of the previous Laplacians is contained
    in the interval $[0,2]$, i.e., $\spec
    {G^\pm}\subset[0,2]$. Moreover, we always have $0 \in \spec G$,
    and $0$ is simple if and only if $G$ is connected.  Similarly, we
    have $2 \in \spec {G^+}$.  The eigenfunction of both eigenvalues
    is a constant function on $V$ as one easily sees
    from~\eqref{eq:DML} and \eqref{eq:DML+}.

  \item
    \label{matrix.rep}
    With respect to the orthonormal basis $\set{\delta_v}{v \in V}$
    with $\delta_v(u)=(\deg v)^{-1/2}$ if $u=v$ and $\delta_v=0$ if
    $u \ne v$, the standard Laplacian denoted with $(-)$ as $\lapl G=\lapl {G^-}$ and the standard signless
    Laplacian denoted with $(+)$ as $\lapl{G^+}$ are represented by the matrices with entries
    \begin{equation}
      \label{eq:DML'}
      (\lapl {G^\mp})_{u,v}
      =
      \begin{cases}
        1, & u=v,\\
        \mp \bigl((\deg u)(\deg v)\bigr)^{-1/2},&
        \text{$u$ and $v$ are adjacent,}\\
        0, & \text{otherwise\;.}
      \end{cases}
    \end{equation}
  \end{enumerate}
\end{remark}

%
\section{Spectral bracketing and graph perturbations}
\label{sec:spectral_preorder}
%

We present in this section our original idea of how to construct
isospectral graphs based on certain graph perturbations whose spectral
effect we can control.  We will sandwich the original graphs between
two auxiliary graphs with high symmetry which will specify the
spectral bracketing. Our method here is based on the concept of ``spectral
preorder'' in relation to vertex contraction (see also \cite[Section~4]{fclp:22a}
for a general spectral analysis of elementary perturbations of multidigraphs).

\subsection{Spectral preorder}
\label{subsec:spec-ord}
In this section we introduce a spectral preorder in the class of simple
graphs with standard weights based on the order relation of
consecutive lists of eigenvalues of the corresponding Laplacians
written in increasing order and repeated according to their
multiplicities. From an algebraic viewpoint the spectral preorder is a
quite flexible generalisation of the eigenvalue interlacing known for
matrices (see e.g.~\cite[Theorem~4.3.28]{horn-johnson:13}
and~\cite[Section~2.5 and 3.2]{brouwer-haemers:12}). This relation can
be generalised to include a shift $t\in\N_0$ in the list of
eigenvalues.  We refer to \cite{fclp:18,fclp:22a} for proofs and additional motivation.
We will use here this preorder to control the spectral effect of
vertex contraction.
\begin{definition}[Spectral preorder $\less$]
  \label{def:spectral-ordering}
  Let $G$ and $G'$ be two graphs and denote the
  corresponding increasing lists of the eigenvalues of the respective
  standard Laplacians, repeated according to their multiplicity, by
  \begin{equation*}
    \bigl(\lambda_1(G), \lambda_2(G), \dots,
    \lambda_{\card{G}}(G)\bigr)
    \qquadtext{and}
    \bigl(\lambda_1(G'), \lambda_2(G'), \dots,
    \lambda_{\card{G'}}(G')\bigr).
  \end{equation*}
  We say that \emph{$G$ is spectrally smaller than $G'$ with shift
    $t \in \N_0$}, and we denote this by
  \begin{equation*}
    G\less[t]G',
  \end{equation*}
  if the following two conditions hold:
  \begin{equation*}
    \card G\ge \card {G'}-t\quadtext{and}
    \lambda_k(G) \le \lambda_{k+t}(G')
    \qquad \text{for all $1 \le k \le \card{G'}-t$.}
  \end{equation*}
  If $t=0$ we write simply $G \less G'$.

  The same definition applies to the signless Laplacian, i.e., we
  write $G^+ \less[t] (G')^+$ if the eigenvalues of the signless
  Laplacians fulfil $\lambda_k(G^+) \le \lambda_{k+t}((G')^+)$ for
  all $1 \le k \le \card{G'}-t$.
\end{definition}

\begin{remark}[spectral preorder visualised]
  \label{rem:spec.ord.visualised}
  We present here the following useful diagrammatic representation of
  the relation $G\less[t]G'$.
  \vspace{2mm}
  \begin{center}
    \begin{tabular}{lp{6ex}p{6ex}p{6ex}|%
      p{6ex}|p{6ex}|p{6ex}|%
      p{6ex}|p{6ex}}%
      \hhline{~~~~-----}
      $G$ & & &
      & $\lambda_1$ & $\lambda_2$ & \dots & $\lambda_{n-1}$
      & \multicolumn{1}{p{6ex}|}{$\lambda_n$}
      \\\hhline{~--------}
      $G'$ &  \multicolumn{1}{|p{6ex}}{$\lambda_1'$}
              &  \multicolumn{1}{|p{6ex}|}{\dots}
                &  \multicolumn{1}{|p{6ex}|}{$\lambda_t'$}
      & $\lambda_{1+t}'$ & $\lambda_{2+t}'$ & \dots
                                          & $\lambda_{n-1+t}'$
      & 
      \\\hhline{~-------~}
    \end{tabular}
  \end{center}
  \vspace{2mm}
  We list the eigenvalues $\lambda_k=\lambda_k(G)$, $n=\card G$
  and $\lambda_k'=\lambda_k(G')$ of $G$ and $G'$ in the first
  and second row, respectively.  The condition
  $\card G \ge \card{G'} - t$ just means that the last box of the
  second row can not exceed the last box of the first row.  Moreover,
  eigenvalues in the same column increase when going down the column
  and one can easily visualise the spectral relation just looking at
  the successive columns.
\end{remark}

Let $G$, $G'$ and $G''$ be finite graphs. We describe next some
useful conventions and easy consequences of the preceding definition.
We write $G \less[t'] G'$ and $G' \less[t''] G''$ simply by
$G\less[t'] G' \less[t''] G''$, and call such estimates
\emph{spectral bracketing} or \emph{eigenvalue interlacing}.
\begin{remark}[spectral bracketing visualised]
   \label{rem:spec.brak.visualised}
  Later we will often consider the graph sandwiching $G' \less G \less[t]G'$
  which implies that
  $\card G+t= \card{G'} \ge \card{G}$.  If $\card{G}=n$, then
  $G' \less G \less[t] G'$ is equivalent with the
  \emph{interlacing} of the spectra corresponding to the Laplacians of
  $G$ and $G'$ similarly as
  in~\cite[Section~2.5]{brouwer-haemers:12},
  i.e.,~\eqref{eq:ev.interlacing} holds.  Using the diagrammatic
  representation of \Rem{spec.ord.visualised}, we write
  \vspace{2mm}
  \begin{center}
    \begin{tabular}{lp{6ex}p{3ex}p{6ex}|p{6ex}%
      |p{6ex}|p{6ex}%
      |p{3ex}%
      |p{6ex}%
      |p{6ex}|  p{6ex} p{3ex}p{6ex}}
      \hhline{~~~~---------}
      $G'$& & & &
      $\lambda'_1$ & $\lambda'_2$ & $\lambda'_3$ & \dots &
      $\lambda'_{n-1}$ & $\lambda'_{n}$ &
      \multicolumn{1}{p{6ex}|}{$\lambda'_{n+1}$} &
      \multicolumn{1}{p{3ex}|}{\dots} &
      \multicolumn{1}{p{6ex}|}{$\lambda'_{n+t}$}
      \\\hhline{~~~~---------}
      $G$ &&&  & $\lambda_1$ & $\lambda_2$ & $\lambda_3$ & \dots
       & $\lambda_{n-1}$ & $\lambda_{n}$ & &
       \\\hhline{~---------}
       $G'$ &
       \multicolumn{1}{|p{6ex}}{$\lambda_1$} &
       \multicolumn{1}{|p{3ex}|}{\dots}
               & $\lambda'_t$ & $\lambda'_{1+t}$ & $\lambda'_{2+t}$
                                  & $\lambda'_{3+t}$ & %
       \dots &
        $\lambda'_{n-1+t}$ & $\lambda'_{n+t}$
       \\\hhline{~---------}
    \end{tabular}
    \vspace{2mm}
  \end{center}
  The condition $\card{G}=\card {G'}-t$ means here that the second
  and third columns are aligned on the right hand side.  In
  particular, if $t=1$ it becomes the usual eigenvalue interlacing
  (justifying also the name)
  \begin{equation*}
    \lambda_1(G') \le \lambda_1(G) \le \lambda_2(G') \le \lambda_2(G)
    \le \dots \le \lambda_n(G') \le \lambda_n(G) \le \lambda_{n+1}(G').
  \end{equation*}
\end{remark}

The following result is a direct consequence of the definition of
spectral preorder and quantifies the stability of eigenvalues with
high multiplicity under vertex contraction perturbation.
\begin{proposition}
  \label{prp:mult.spec}
  Consider two graphs $G,G'$ satisfying the spectral relation
  $G' \less[t'] G \less[t''] G'$.  If $\lambda$ is an eigenvalue of
  the Laplacian on $G'$ with multiplicity $\mu>t'+t''$, then
  $\lambda$ is an eigenvalue of the Laplacian on $G$ with
  multiplicity at least $\mu-(t'+t'')$.
\end{proposition}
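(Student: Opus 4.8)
The plan is a direct index chase using only the two interlacing inequalities encoded in $G' \less[t'] G \less[t''] G'$ together with the accompanying cardinality bounds; no eigenfunctions are needed. Write $n=\card G$ and $n'=\card{G'}$. By \Def{spectral-ordering} the hypothesis unpacks into $n' \ge n-t'$, $n \ge n'-t''$, and the pointwise estimates
\begin{equation*}
  \lambda_k(G') \le \lambda_{k+t'}(G) \quad (1 \le k \le n-t'),
  \qquad
  \lambda_k(G) \le \lambda_{k+t''}(G') \quad (1 \le k \le n'-t'').
\end{equation*}

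First I would locate the block of eigenvalues of $G'$ equal to $\lambda$. Let $p \ge 0$ be the number of eigenvalues of $G'$ strictly below $\lambda$, so that $\lambda_{p+1}(G') = \dots = \lambda_{p+\mu}(G') = \lambda$ and $p+\mu \le n'$. The idea is then to squeeze the matching eigenvalues of $G$ between these repeated values from both sides. For the upper squeeze I would use the second estimate: whenever $k+t'' \le p+\mu$, monotonicity of the ascending $G'$-list gives $\lambda_{k+t''}(G') \le \lambda$, hence $\lambda_k(G) \le \lambda$; this holds for $k \le p+\mu-t''$. For the lower squeeze I would use the first estimate: for $k \ge p+1$ we have $\lambda \le \lambda_k(G')$, so $\lambda \le \lambda_{k+t'}(G)$, i.e.\ $\lambda_j(G) \ge \lambda$ for every $j \ge p+1+t'$.

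Combining the two, every index $j$ with
\begin{equation*}
  p+1+t' \le j \le p+\mu-t''
\end{equation*}
satisfies both $\lambda_j(G)\le\lambda$ and $\lambda_j(G)\ge\lambda$, hence $\lambda_j(G)=\lambda$. This range contains exactly $\mu-(t'+t'')$ integers, which is positive by the hypothesis $\mu>t'+t''$, so $\lambda$ is an eigenvalue of the Laplacian on $G$ with multiplicity at least $\mu-(t'+t'')$.

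The one place that requires care — routine rather than hard — is verifying that every index invoked is legal: that the two pointwise estimates are applied within their stated $k$-ranges and that the resulting $G$-indices lie in $\{1,\dots,n\}$. Here the cardinality bounds do exactly the needed work. The bound $p+\mu \le n'$ together with $n \ge n'-t''$ gives $p+\mu-t'' \le n$, so the upper end of the range is a valid $G$-index, while $p+1+t' \ge 1$ handles the lower end. For the lower squeeze the underlying index is $k=j-t'$, which stays in $1 \le k \le n-t'$ precisely because $t'+1 \le j \le n$; for the upper squeeze the index is $k=j$, which stays in $1 \le k \le n'-t''$ because $j \le p+\mu-t'' \le n'-t''$. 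I expect this bookkeeping to be the only subtlety, and the cardinality hypotheses built into $\less[t']$ and $\less[t'']$ are precisely what make it go through.
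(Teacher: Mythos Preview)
Your proof is correct and is precisely the direct index-chasing argument the paper has in mind; the paper itself gives no proof beyond calling the proposition ``a direct consequence of the definition of spectral preorder''. Your careful verification of the index ranges using the cardinality bounds $n'\ge n-t'$ and $n\ge n'-t''$ is exactly what is needed to make the argument rigorous.
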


\subsection{Spectral bracketing and vertex contraction}
\label{subsec:vx.contr.sp}

The following result will be central to the class of examples considered in the
next section. An important observation for our method is that
multiple eigenvalues remain eigenvalues with lower multiplicity in the
quotient graph provided the shrinking number of the quotient is small
enough (cf.~\Def{contraction}). We present next vertex contraction
as a perturbation with low spectral cost for standard weights; see
Theorem~3.14 in~\cite{fclp:22a} for a proof.

\begin{proposition}[contracting vertices and spectral bracketing]
  \label{prp:homomorphism}
  Let $G$ be a finite discrete graph.  If $\wt G=\quotient G$ is a
  graph obtained from $G$ by contracting vertices according to an
  equivalence relation $\simContr$ on $V(G)$, then we have the
  following spectral relation between the corresponding standard
  Laplacians
  \begin{equation*}
    G \less \wt G \less[t] G,
  \end{equation*}
  where $t=\card{ G}-\card{\wt G}\ge 0$ is the shrinking number of
  $\simContr$ (cf.~\Def{contraction}). The same result is true for
  the signless Laplacians, i.e., we have
  $G^+ \less \wt G^+ \less[t] G^+$.
\end{proposition}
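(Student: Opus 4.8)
The plan is to run the variational (min--max) method, exploiting that for standard weights \emph{both} the quadratic form and the Hilbert--space norm are sums over edges. A direct computation from~\eqref{eq:DML} and~\eqref{eq:DML+} gives, for $f\in\lsqr{V,\deg}$,
\[
  \langle f,\lapl{G^-} f\rangle=\sum_{\{u,v\}\in E}\abssqr{f(u)-f(v)},
  \qquad
  \langle f,\lapl{G^+} f\rangle=\sum_{\{u,v\}\in E}\abssqr{f(u)+f(v)},
\]
while $\normsqr[{\lsqr{V,\deg}}]{f}=\sum_{v}\abssqr{f(v)}\deg v=\sum_{\{u,v\}\in E}\bigl(\abssqr{f(u)}+\abssqr{f(v)}\bigr)$ is \emph{also} an edge sum. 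By Courant--Fischer each $\lambda_k$ is the min--max of the Rayleigh quotient $\mathcal R_G(f)=\langle f,\lapl G f\rangle/\normsqr[{\lsqr{V,\deg}}]{f}$ over $k$-dimensional subspaces, and the claimed relation $G\less\wt G\less[t]G$ is exactly the two-sided interlacing~\eqref{eq:ev.interlacing}. I will treat the standard Laplacian; the signless case is identical after replacing $f(u)-f(v)$ by $f(u)+f(v)$.

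First I would reduce to a single elementary merge of shrinking number $1$. Since each block of the partition induced by $\simContr$ is an independent set of $G$ (contractions must be loop-free, cf.~\Def{contraction}), the quotient can be reached by a chain $G=G_0,G_1,\dots,G_t=\wt G$ in which $G_{i+1}$ merges two distinct blocks contained in a common final block; every $G_i$ is then again a loop-free simple graph and each step reduces the order by exactly $1$. Because the preorder composes elementarily ($H\less[a]H'$ and $H'\less[b]H''$ give $H\less[a+b]H''$, by chaining $\lambda_k(H)\le\lambda_{k+a}(H')\le\lambda_{k+a+b}(H'')$ and comparing orders), the steps $G_i\less G_{i+1}$ chain to $G\less\wt G$ and the steps $G_{i+1}\less[1]G_i$ chain to $\wt G\less[t]G$. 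Thus it suffices to prove $G'\less\wt G\less[1]G'$ for a single merge of two non-adjacent vertices $a,b$.

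For a single merge I would use the pullback $\Phi\colon\lsqr{V(\wt G),\deg^{\wt G}}\to\lsqr{V(G'),\deg^{G'}}$, $(\Phi g)(v)=g([v])$, whose range is the codimension-one subspace of functions constant on $\{a,b\}$. When $a$ and $b$ have \emph{no} common neighbour, no edges collapse: one checks directly from the edge-sum expressions that $\qf d_{G'}(\Phi g)=\qf d_{\wt G}(g)$ and $\normsqr[{\lsqr{V,\deg}}]{\Phi g}=\normsqr{g}$ (here $\deg^{\wt G}[a]=\deg a+\deg b$), so $\Phi$ is an isometry onto its range and $\lapl{\wt G}=\Phi^{*}\lapl{G'}\Phi$ is the compression of $\lapl{G'}$ to that subspace. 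Cauchy interlacing (the Poincar\'e separation theorem) for a codimension-one compression then yields precisely $\lambda_k(G')\le\lambda_k(\wt G)\le\lambda_{k+1}(G')$, i.e.\ $G'\less\wt G\less[1]G'$; the constant-eigenfunction bookkeeping of \Remenum{DML}{spec.0.2} is unaffected.

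The hard part is the case where $a$ and $b$ share neighbours, since then the two parallel edges created at each common neighbour must be collapsed to keep $\wt G$ simple. This is exactly where standard weights matter: collapsing a parallel edge removes \emph{matched} contributions from the edge-sum form and from the degree-norm at once, so $\Phi$ is no longer an isometry, and $\mathcal R_{\wt G}(g)$ and $\mathcal R_{G'}(\Phi g)$ become two \emph{different} weighted averages of the same local ratios $\abssqr{g([u])-g([v])}/\bigl(\abssqr{g([u])}+\abssqr{g([v])}\bigr)$, with neither dominating the other; hence the naive Rayleigh comparison breaks and Cauchy interlacing cannot be applied verbatim. Routing the collapse through separate edge deletions is also barred, as edge deletion has large spectral cost for standard weights. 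The merge must therefore be controlled as a single atomic perturbation, handling numerator and denominator of the Rayleigh quotient jointly on the affected pairs; this is the one genuine obstacle, and it is carried out for arbitrary weights in \cite[Theorem~3.14]{fclp:22a}, of which the present statement is the specialisation to standard weights.
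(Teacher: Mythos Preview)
The paper does not actually prove this proposition: the sentence immediately preceding it reads ``see Theorem~3.14 in~\cite{fclp:22a} for a proof'', and no further argument is given.  Your proposal therefore already contains strictly more than the paper --- a correct reduction to shrinking number~$1$ via transitivity of $\less[t]$, and a clean Cauchy--interlacing argument for the sub-case where the two merged vertices share no neighbour (so that the pullback $\Phi$ is an honest isometry onto a codimension-one subspace).  For the remaining sub-case you defer to the very same reference the paper cites, and your diagnosis of the obstruction is accurate: collapsing a parallel pair $\{a,c\},\{b,c\}$ into a single edge lowers $\deg c$ by one, so both the form and the degree-norm lose a matched term and $\Phi$ fails to be isometric, blocking a naive Rayleigh comparison.

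One remark worth making: every contraction actually used in \Sec{examples} is of the \emph{easy} type.  In the fuzzy-ball construction the pendant vertices of $\wh K_r$ have pairwise disjoint neighbourhoods, and after the first contraction the $s$ resulting vertices are attached to disjoint blocks of $V(K_r)$ (since the partition $A$ splits $\{1,\dots,r\}$), so the second contraction to $K_{r+1}$ again merges vertices with no common neighbour; the bipartite and subdivision examples are analogous.  Hence your self-contained Cauchy--interlacing argument already suffices for every application in the paper, and the deferral to~\cite{fclp:22a} is needed only for the proposition in its stated generality --- exactly as in the paper itself.
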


Note that \Prp{mult.spec} can be immediately applied to the graph
perturbation introduced before. In fact, the total spectral shift $t$
is the shrinking number of the relation $\sim$
(cf. \Prp{homomorphism}).

\subsection{Spectral bracketing and determination of the spectrum}
\label{subsec:vx.contr.sp-b}

We tacitly understand subsets $\Lambda$ of a spectrum $\spec G$
as \emph{multisets} without formally introducing them.  A simple
workaround is to think of $\Lambda$ as an ordered list
$\Lambda=(\lambda_1,\dots,\lambda_n)$ with
$\lambda_1\le \lambda_2 \le \dots\le \lambda_n$, and multiple members
are repeated according to their multiplicity.  The notation
$\lambda \in \Lambda$ then means that there is $j \in \{1,\dots, n\}$
such that $\lambda=\lambda_j$.  Similarly, we understand
$\Lambda ' \subset \Lambda$, i.e., each $\lambda \in \Lambda'$
appearing $\mu$ times appears at least $\mu$ times also in $\Lambda$.
\begin{definition}[multiplicity and subsets]
  We say that a subset of non-negative numbers $\Lambda$ has
  \emph{multiplicity $\mu$ in $\spec G$} if each element
  $\lambda \in \Lambda$ having multiplicity $s$ in $\Lambda$ has at
  least multiplicity $s \mu$ in $\spec G$.  We denote this relation
  by $\Lambda^{(\mu)}\subset \spec G$.
\end{definition}

Next, we present the key result on which we base our construction of
isospectral graphs in \Sec{examples}. Namely, we specify conditions
that guarantee that the spectrum of a graph $\wt G$ is obtained by
spectrally bracketing the original graph with the auxiliary graphs
$G_j$ having eigenvalues of high multiplicity.

\begin{theorem}
  \label{thm:main.idea}
  Let $J$ be a finite index set and consider for each $j\in J$ the
  finite set of non-negative numbers $\Lambda_j\subset [0,2]$. Assume
  that the graphs $G$ and $G_j$ ($j\in J$) with standard Laplacians
  satisfy the following conditions:
  \begin{enumerate}
  \item
    \label{main.idea.a}
    $G_j \less[t'_j] G \less[t''_j] G_j$, $j\in J$ and denote the
    total spectral shift by $t_j:=t_j'+t_j''$;
  \item
    \label{main.idea.b}
    $\Lambda_j$ has multiplicity $\mu_j$ in $\spec {G_j}$, $j\in J$,
    and $\mu_j> t_j=t_j'+t_j''$;
  \item
    \label{main.idea.c}
    the sets ${\Lambda_j}$, ${j \in J}$ are pairwise disjoint;
  \item the order of $G$ is
    \label{main.idea.d}
    \begin{equation}
      \label{eq:total.number}
      \card{G}
      =\sum_{j \in J}(\mu_j-t_j)\card{\Lambda_j}.
    \end{equation}
  \end{enumerate}
  Then the spectrum of $G$ is determined by the subsets $\Lambda_j$,
  namely we have
  $\spec{G}=\biguplus_{j \in J} \Lambda_j^{(\mu_j-t_j)}$, where
  $\biguplus$ means the disjoint union of multisets.
\end{theorem}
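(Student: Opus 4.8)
The plan is to derive the claim from \Prp{mult.spec} together with a simple dimension count, so that no eigenfunction ever needs to be exhibited. The idea is that hypothesis~(\ref{main.idea.a}) lets me transport each high-multiplicity eigenvalue of the auxiliary graphs $G_j$ to an eigenvalue of $G$ with a controlled loss in multiplicity, hypothesis~(\ref{main.idea.c}) guarantees that these contributions do not overlap, and hypothesis~(\ref{main.idea.d}) forces them to exhaust all of $\spec{G}$.

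First I would fix $j \in J$ and apply \Prp{mult.spec} with $G_j$ in the role of the ``outer'' graph $G'$ and $G$ in the role of the ``inner'' graph. By~(\ref{main.idea.a}) the sandwiching $G_j \less[t'_j] G \less[t''_j] G_j$ holds with total shift $t_j = t'_j + t''_j$, and by~(\ref{main.idea.b}) every $\lambda \in \Lambda_j$ is an eigenvalue of $\lapl{G_j}$ of multiplicity at least $\mu_j$. Since $\mu_j > t_j$, \Prp{mult.spec} yields that each such $\lambda$ is an eigenvalue of $\lapl{G}$ of multiplicity at least $\mu_j - t_j$. As this holds for every element of $\Lambda_j$, it is exactly the statement $\Lambda_j^{(\mu_j - t_j)} \subset \spec{G}$.

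Next I would combine these containments over $j \in J$. The sets $\Lambda_j$ are pairwise disjoint by~(\ref{main.idea.c}), so a given value is counted in at most one $\Lambda_j$; hence the individual multiplicity estimates add without overlap and the disjoint union of multisets $\biguplus_{j\in J}\Lambda_j^{(\mu_j-t_j)}$ is again a sub-multiset of $\spec{G}$. Its total cardinality is $\sum_{j\in J}(\mu_j-t_j)\card{\Lambda_j}$, which by~(\ref{main.idea.d}) equals $\card{G}$, the number of eigenvalues of $\lapl{G}$ counted with multiplicity. A sub-multiset of $\spec{G}$ of full cardinality must coincide with $\spec{G}$, and this gives the asserted identity $\spec{G}=\biguplus_{j\in J}\Lambda_j^{(\mu_j-t_j)}$.

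The substance of the argument is all in \Prp{mult.spec}; the only point requiring care is the bookkeeping in the middle step, where the pairwise disjointness~(\ref{main.idea.c}) is precisely what allows the separate lower bounds on multiplicities to be summed rather than merely maximised. Once that is noted, the conclusion is pure exhaustion by cardinality, and in particular the exact eigenvalues and eigenfunctions of $G$ never enter --- only the multiplicities supplied by the auxiliary graphs.
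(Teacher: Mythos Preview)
Your proof is correct and follows essentially the same approach as the paper's own proof: apply \Prp{mult.spec} to each bracketing to obtain $\Lambda_j^{(\mu_j-t_j)}\subset\spec{G}$, use the disjointness~(\ref{main.idea.c}) to combine these inclusions additively, and conclude by the cardinality count~(\ref{main.idea.d}). Your write-up is somewhat more explicit about the bookkeeping (in particular the role of disjointness in allowing the multiplicity bounds to be summed rather than maximised), but the logical content is identical.
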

\begin{proof}
  From \Prp{mult.spec} and conditions
  \itemref{main.idea.a}--\itemref{main.idea.b} above we conclude that
  each $\Lambda_j$ has multiplicity at least $(\mu_j-t_j)$ in
  $\spec G$, where the total shift is $t_j:=t_j'+t_j''$. We denote
  this relation as $\Lambda_j^{(\mu_j-t_j)} \subset \spec{G}$ (as
  multiset). Moreover, since $\Lambda_j \cap \Lambda_{j'}=\emptyset$
  for all $j,j' \in J$ with $j \ne j'$ condition \itemref{main.idea.c}
  implies that an eigenvalue of $G$ is contained in only \emph{one}
  of the sets $\Lambda_j$. Finally, condition~\itemref{main.idea.d}
  guarantees that all eigenvalues of $G$ are necessarily in one of
  the sets $\Lambda_j$ and, therefore, the spectrum is exhausted.
\end{proof}

\begin{remark}
  \label{rem:main.idea}
  \indent
  \begin{enumerate}
  \item
    \label{why.disjoint}
    The disjointness condition~\itemref{main.idea.c} of
    \Thm{main.idea} is necessary to guarantee that one
    obtains all \emph{different} eigenvalues, i.e., that eigenvalues
    from different sets $\Lambda_j$ and $\Lambda_{j'}$ for
    $j,j' \in J$ with $j \ne j'$ correspond to \emph{different}
    eigenfunctions.
  \item
    \label{one.ev.missing}
    Sometimes the condition~\itemref{main.idea.d} can be relaxed. For
    example, recall that $0$ (resp. $2$) is always an eigenvalue of
    the Laplacian (resp. signless Laplacian) with constant
    eigenfunction; also, some eigenvalues may possibly be known
    because the graph is bipartite etc.
    \begin{itemize}
    \item It is enough that~\eqref{eq:total.number} adds up to $n-1$,
      where we put $n:=\card {G}$. In fact, the missing eigenvalue
      can be recovered from the remaining ones by the following trace
      relation for the the standard or signless Laplacians
      \begin{equation*}
        \sum_{k=1}^n \lambda_k(G)
        = \tr \lapl {G} = n\;.
      \end{equation*}
      The latter equality follows from the fact that the
      matrix representation of $\lapl{G^\mp}$ has only $1$ on its
      diagonal, cf.~\eqref{eq:DML'}.

    \item Any known eigenvalues of $G$ can be put in a (multi-)set
      $\Lambda_{j_0}$ for some index $j_0 \in J$.  Choosing
      $G_{j_0}=G$, $t_{j_0}'=t_{j_0}''=0$ and $\mu_{j_0}=1$,
      \Thm{main.idea} can formally applied and only
      $\card{\wt G}-\card {\Lambda_{j_0}}$ eigenvalues need to be
      recovered.  We apply this idea sometimes to the eigenvalue $0$
      or to $\{0,2\}$ if the graph is bipartite.  Recall finally, that
      if $G$ is bipartite, one can also use the fact that
      $\lambda \in \spec {G}$ if and only if $2-\lambda \in \spec {G}$
      (cf.~\cite[Lemma~1.8]{chung:97}
      or~\cite[Proposition~2.3]{lledo-post:08b}).
    \end{itemize}
  \end{enumerate}
\end{remark}

%
%
\section{A perturbative construction of isospectral graphs}
\label{sec:examples}
%

We base our discrete perturbative approach (spectral bracketing
technique) on the result presented in \Thm{main.idea} and using
auxiliary graphs having suitable symmetry.
Note that in the following classes of examples of isospectral graphs (see
e.g. \Thm{fuzzy-ball}) we can determine the spectrum explicitly due to
the fact that we know the spectra of the auxiliary graphs.
But our technique only requires to know that
the multiplicity of certain eigenvalues of the auxiliary graphs is
high enough to guarantee isospectrality. Our bracketing method is, in this sense, based on
\emph{multiplicities} of certain eigenvalues and not on explicit
computation of the spectrum or eigenfunctions of the auxiliary
graphs. In \cite{fclp:22c} we develop a new geometrical construction
of families of isospectral magnetic graphs.  This method starts with
several copies of an arbitrary building block graph $G$ which are
glued following a specific pattern.  To prove isospectrality in the
mentioned reference we lift eigenfunctions on $G$ and, also,
eigenfunctions with certain Dirichlet conditions on the merged
vertices to the constructed graph. We mention that the perturbative
method developed in this article can also be applied to alternatively
prove isospectrality of some families of graphs constructed in
\cite{fclp:22c}.

Our construction of non-isomorphic graphs in some examples is based on
the combinatorial notion of partition of length $s\in \N$ of a given
natural number $r\in\N$ (which we call an $s$-partition of $r$ for
short). Recall that an $s$ partition of $r$ is given by a multiset
$A=\{a_1,a_2,\dots,a_s\}$ (i.e. taking multiplicities into account)
with $a_i\in\N$ and $a_1+\dots+a_s=r$. We will show that different
$s$-partitions of $r$ lead to isospectral non-isomorphic graphs. As an
example, the smallest natural number having two different partitions
is $r=4$: the length is $s=2$ and the partitions are $\{1,3\}$ and
$\{2,2\}$ (since $4=2+2=1+3$).

\subsection{The fuzzy ball construction}
\label{subsec:fuzzy-ball}

We revisit and generalise the fuzzy ball construction
of~\cite[Section~4]{butler-grout:11}, proof isospectrality using
spectral bracketing technique and \emph{without} computing
eigenfunctions.  Let $K_r$ be the complete graph with $r$ vertices
and denote by $\wh {K}_r$ the graph with $2r$ vertices obtained
from $K_r$ by attaching a pendant edge to each vertex of
$K_r$.

We first calculate the eigenvalues with high multiplicity since we
only need this information to prove isospectrality of the family of
graphs in \Thm{fuzzy-ball}.
\begin{lemma}[Spectrum of auxiliary graphs]
  \label{lem:lambda.mult} \indent
  \begin{enumerate}
  \item
    \label{lambda.mult.c}
    If $K_r$ is the complete graph with $r$ vertices, then
    \begin{equation*}
      1 \pm \frac 1 {r-1}
      \in \spec{K_r^\mp}
    \end{equation*}
    has multiplicity $r-1$ in the spectrum of the (signless) standard
    Laplacian (here and in the sequel, the upper sign is for the
    standard Laplacian, the lower sign for the signless Laplacian).
  \item
    \label{lambda.mult.d}
    Let $\wh{K}_r$ be the complete graph with a pendant edge
    decoration at each vertex of $K_r$. Then
    \begin{equation*}
      \frac{2r\pm 1-\sqrt{4r+1}}{2r},
      \frac{2r\pm 1+\sqrt{4r+1}}{2r}
      \in \spec {\wh {K}_r^\mp}
    \end{equation*}
    with multiplicity $r-1$.
  \end{enumerate}
\end{lemma}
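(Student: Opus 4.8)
The plan is to diagonalise both Laplacians by hand, carrying out the standard case (upper sign) explicitly and deducing the signless case (lower sign) from the identity $\Delta_{G^+}=2I_n-\Delta_{G^-}$, which is immediate from the matrix representation~\eqref{eq:DML'}: adding the two matrices cancels the off-diagonal entries and leaves $2$ on the diagonal. Hence $\lambda\in\spec{G^-}$ if and only if $2-\lambda\in\spec{G^+}$, with identical eigenfunctions and multiplicities. It therefore suffices to locate the claimed standard eigenvalues, since $2-(1+\tfrac1{r-1})=1-\tfrac1{r-1}$ and $2-\tfrac{2r+1\pm\sqrt{4r+1}}{2r}=\tfrac{2r-1\mp\sqrt{4r+1}}{2r}$ reproduce exactly the lower-sign formulas.

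For part~\itemref{lambda.mult.c} I use that every vertex of $K_r$ has degree $r-1$, so $D_{K_r}=(r-1)I_r$ and $A_{K_r}=J_r-I_r$ with $J_r$ the all-ones matrix. Then $\Delta_{K_r^-}=I_r-\tfrac1{r-1}(J_r-I_r)=\tfrac{r}{r-1}I_r-\tfrac1{r-1}J_r$. Since $J_r$ has eigenvalue $r$ on the constant vector and eigenvalue $0$ on its $(r-1)$-dimensional orthogonal complement, $\Delta_{K_r^-}$ has the simple eigenvalue $0$ and the eigenvalue $\tfrac{r}{r-1}=1+\tfrac1{r-1}$ with multiplicity $r-1$, as claimed.

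For part~\itemref{lambda.mult.d} I write a function on $\wh K_r$ as a pair $(a,b)\in\R^r\times\R^r$, where $a_i$ is its value at the $i$-th vertex of $K_r$ (degree $r$) and $b_i$ its value at the pendant attached to it (degree $1$). The eigenvalue equation $\Delta_{\wh K_r^-}f=\lambda f$ reads at the pendant $\lambda b_i=b_i-a_i$, i.e.\ $a_i=(1-\lambda)b_i$, and at the core vertex $\lambda a_i=a_i-\tfrac1r\bigl(\sum_{j\ne i}a_j+b_i\bigr)$. Restricting to the $(r-1)$-dimensional space of core vectors with $\sum_i a_i=0$ and setting $b_i=a_i/(1-\lambda)$ (legitimate because $\lambda=1$ forces $f=0$) collapses the core equation to the scalar relation $\lambda=1+\tfrac1r-\tfrac1{r(1-\lambda)}$, equivalently $r\lambda^2-(2r+1)\lambda+r=0$, whose roots are $\lambda_\pm=\tfrac{2r+1\pm\sqrt{4r+1}}{2r}$. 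For each root the mean-zero vectors $a$ produce an $(r-1)$-dimensional eigenspace, giving multiplicity at least $r-1$.

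The remaining --- and only delicate --- point is that these multiplicities are exactly $r-1$, i.e.\ that the spectrum is exhausted. The orthogonal complement of the mean-zero sector is the two-dimensional \emph{symmetric} subspace $a_i\equiv a$, $b_i\equiv b$, on which the same two equations reduce to $\lambda\bigl(r\lambda-(r+1)\bigr)=0$, contributing the eigenvalues $0$ and $1+\tfrac1r$. For $r\ge2$ one checks $\lambda_\pm\notin\{0,1+\tfrac1r\}$ (as $(2r+1)^2\ne4r+1$ and $\sqrt{4r+1}\ne1$), so the list $0,\,1+\tfrac1r,\,\lambda_+^{(r-1)},\,\lambda_-^{(r-1)}$ already comprises all $2r$ eigenvalues and each of $\lambda_\pm$ has multiplicity exactly $r-1$. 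The signless assertion then follows by the substitution $\lambda\mapsto2-\lambda$ from the first paragraph. The main obstacle throughout is this multiplicity bookkeeping in part~\itemref{lambda.mult.d}; everything else is elementary linear algebra, and one could alternatively organise the count via the $S_r$-symmetry permuting the legs, whose trivial and standard isotypic components give precisely the two-dimensional and two $(r-1)$-dimensional pieces above.
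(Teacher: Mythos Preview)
Your proof is correct and self-contained, but it takes a genuinely different route from the paper. The paper dismisses part~\itemref{lambda.mult.c} as standard and, for part~\itemref{lambda.mult.d}, invokes an external factorisation result for the normalised Laplacian polynomial of a rooted product (Corollary~1 of~\cite{heydari:19}), observing that $\wh K_r$ is the rooted product of $K_r$ with $K_2$; the spectrum then drops out of that formula after some algebra. You instead diagonalise directly by exploiting the $S_r$-symmetry: splitting $\R^{2r}$ into the two-dimensional symmetric piece (yielding $0$ and $1+\tfrac1r$) and the mean-zero isotypic piece (yielding the two quadratic roots $\lambda_\pm$, each with multiplicity $r-1$), and then checking these four eigenvalues are distinct so the count is exact. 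Your reduction of the signless case via $\Delta_{G^+}=2I-\Delta_{G^-}$ is also more explicit than the paper, which simply asserts the signless values. The trade-off is clear: the paper's citation is shorter and situates the computation inside a general rooted-product framework, whereas your argument is elementary, requires no external input, and makes the eigenfunctions and the role of the symmetry transparent.
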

\begin{proof}
  Part~\itemref{lambda.mult.c} is standard. To show
  part~\itemref{lambda.mult.d} we use a factorisation result of
  characteristic polynomials in \cite{heydari:19}.  In fact, the
  decorated complete graph $\wh{K}_r$ can be seen as a rooted product
  of ${K}_r$ with edges $K_2$ on each vertex. Since ${K}_r$ a is
  simple $(r-1)$-regular graph and since ${K}_2$ has degree $1$ we
  obtain from Corollary~1 of~\cite{heydari:19} after some
  straightforward manipulations that the spectrum is as claimed
  in~\eqref{lambda.mult.d}.
\end{proof}
\begin{figure}[h]
  \centering {
    \begin{tikzpicture}[auto, vertex/.style={circle,draw=black!100,%
        fill=black!100, 
        inner sep=0pt,minimum size=1mm},scale=1] %
      \foreach \x in {0,60,120,180,240,300,360} %
      { %
        \node (C\x) at ({cos(\x)},{sin(\x)}) [vertex,
        label=below:] {}; %
        \node (B\x) at ({2*cos(\x)},{2*sin(\x)}) [vertex,label=below:] {};%
        \path [-](C\x) edge node[right] {} (B\x);%
      }%
      \foreach \x in {0,60,120,180,240,300,360} %
      { %
        \foreach \y in {0,60, ..., \x}
        { %
          \path [-](C\x) edge node[right] {} (C\y);
        } %
      }%
      \draw[] (0.35,-1.5)node[left] {\small$\wh{K}_6$};
    \end{tikzpicture} %
    \quad \quad \quad
    \begin{tikzpicture}[auto, vertex/.style={circle,draw=black!100,%
        fill=black!100, 
        inner sep=0pt,minimum size=1mm},scale=1] %
      \foreach \x in {0,60,120,180,240,300,360} %
      { %
        \node (C\x) at ({cos(\x)},{sin(\x)}) [vertex,
        label=below:] {}; %
        \foreach \y in {0,60, ..., \x}
        { %
          \path [-](C\x) edge node[right] {} (C\y);
        } %
      }%
      \node (B1) at (0,2)[vertex,label=below:] {}; %
      \path [-] (B1) edge node[right]{} (C60); %
      \path [-] (B1) edge node[right]{} (C120); %
      \node (B2) at (-1.41,-0.88)[vertex,label=below:] {}; %
      \path [-] (B2) edge node[right]{} (C180); %
      \path [-] (B2) edge node[right]{} (C240); %
      \node (B3) at (1.41,-0.88)[vertex,label=below:] {}; %
      \path [-] (B3) edge node[right]{} (C300); %
      \path [-] (B3) edge node[right]{} (C360); %
      %
      \draw[] (0.35,-1.5)node[left] {\small$\wh{K}_6(A)$};
    \end{tikzpicture}%
    \quad \quad \quad
    \begin{tikzpicture}[auto, vertex/.style={circle,draw=black!100,%
        fill=black!100, 
        inner sep=0pt,minimum size=1mm},scale=1] %
      \foreach \x in {0,60,120,180,240,300,360} %
      { %
        \node (C\x) at ({cos(\x)},{sin(\x)}) [vertex,
        label=below:] {}; %
        \foreach \y in {0,60, ..., \x}
        { %
          \path [-](C\x) edge node[right] {} (C\y);
        } %
      }%
      \node (B1) at (2,0)[vertex,label=below:] {}; %
      \path [-] (B1) edge node[right]{} (C360); %
      \node (B2) at (0,2)[vertex,label=below:] {}; %
      \path [-] (B2) edge node[right]{} (C60); %
      \path [-] (B2) edge node[right]{} (C120); %
      \node (B3) at (-1,-1.2)[vertex,label=below:] {}; %
      \path [-] (B3) edge node[right]{} (C180); %
      \path [-] (B3) edge node[right]{} (C240); %
      \path [-] (B3) edge node[right]{} (C300); %
      \draw[] (0.35,-1.5)node[left] {\small$\wh{K}_6(B)$};
    \end{tikzpicture}%
    \quad \quad \quad
    \begin{tikzpicture}[auto, vertex/.style={circle,draw=black!100,%
        fill=black!100, 
        inner sep=0pt,minimum size=1mm},scale=1] %
      \foreach \x in {0,1,2,3,4,5,6,7} %
      { %
        \node (C\x) at ({cos(\x*360/7-12.9)},{sin(\x*360/7-12.9)})%
        [vertex,
        label=below:] {}; %
      }%
      \foreach \x in {0,1,2,3,4,5,6,7} %
      { %
        \foreach \y in {0, ..., \x}
        { %
          \path [-](C\x) edge node[right] {} (C\y);
        } %
      }%
      \draw[] (0.35,-1.5)node[left] {\small$K_7$};
    \end{tikzpicture} %
  } \caption{The figures above correspond to the auxiliary graph
    $\wh{K}_6$, the isospectral graphs $\wh{K}_6(A)$ and
    $\wh{K}_6(B)$ for $A=(2,2,2)$ and $B=(1,2,3)$, and finally the
    auxiliary graph $K_7$.  Note that $\wh{K}_6(A)$ and
    $\wh{K}_6(B)$ are obtained from $\wh{K}_6$ by contracting
    the pendant vertices according to $A$ and $B$, and that $K_7$
    is obtained from either $\wh{K}_6(A)$ or $\wh{K}_6(B)$ by
    contracting all formerly pendant vertices into one vertex.}
  \label{fig:fuzzy-ball.ex}
\end{figure}
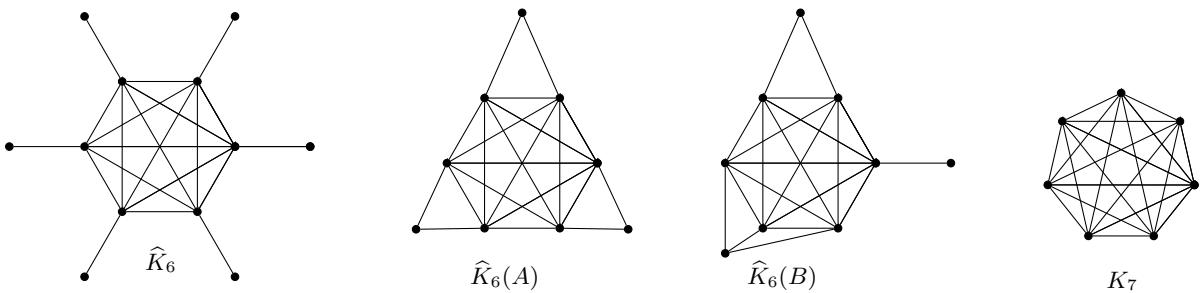

We now construct the first example class (the so-called \emph{fuzzy
  balls}). Note that we can also treat the signless version without
any modification of the proof and that the signless spectrum cannot be
calculated from the unsigned one as the graphs are not regular. We
start with the decorated complete graph $\wh{K}_r$.  Let
$A=\lMset a_1,\dots,a_s \rMset$ be an $s$-partition of $r \in
\N$ and label the pendant vertices as
$\{v_1,\dots,v_r\}$. Consider the equivalence relation
$\sim_A$ which contracts the set of vertices
$\{v_1,\dots,v_{a_1}\}$, $\{v_{a_1+1},\dots,v_{a_1+a_2}\}$,
$\dots$, $\{v_{a_1+\dots+a_{n-1}+1},\dots,a_r\}$ into
$s$ vertices and denote the corresponding quotient graph by
$\wh {K}_r(A)=\quotient[A]{\wh{Gr
    K}_r}$.  In order to have two different $s$-partitions of
$r$ we necessarily need $r \ge 4$ and $s \in \{2,\dots, r-1\}$.

\begin{theorem}[``The fuzzy ball theorem'']
  \label{thm:fuzzy-ball}
  Assume that $A$ and $B$ are two different $s$-partitions of
  $r$. Then the graphs $\wh{K}_r(A)$ and $\wh{K}_r(B)$
  constructed above are non-isomorphic and
  isospectral for the standard and signless standard Laplacian.
\end{theorem}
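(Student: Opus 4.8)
The plan is to apply \Thm{main.idea} to $G=\wh K_r(A)$ with the two auxiliary graphs $G_1=\wh K_r$ and $G_2=K_{r+1}$, feeding in their high-multiplicity eigenvalues from \Lem{lambda.mult}. The decisive feature is that every ingredient below — the two spectral shifts, the two eigenvalue sets, and their multiplicities — depends only on $r$ and $s$ and not on the chosen $s$-partition, so once the spectrum is shown to be exhausted it must coincide for $A$ and $B$.

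First I would set up the two bracketings using \Prp{homomorphism}. Since $\wh K_r(A)$ arises from $\wh K_r$ by contracting the $r$ pendant vertices into $s$ classes, the shrinking number is $r-s$ and \Prp{homomorphism} gives $\wh K_r \less \wh K_r(A) \less[r-s] \wh K_r$, so for $G_1=\wh K_r$ the total shift is $t_1=r-s$. Merging the $s$ (formerly pendant) classes of $\wh K_r(A)$ into a single vertex produces a vertex adjacent to all of $K_r$, i.e.\ the complete graph $K_{r+1}$, with shrinking number $s-1$; hence $\wh K_r(A) \less K_{r+1} \less[s-1] \wh K_r(A)$, which rearranges to $K_{r+1} \less[s-1] \wh K_r(A) \less K_{r+1}$, so for $G_2=K_{r+1}$ the total shift is $t_2=s-1$.

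Next I would read off the eigenvalue sets from \Lem{lambda.mult}: $\Lambda_1=\bigl\{\frac{2r\pm1-\sqrt{4r+1}}{2r},\,\frac{2r\pm1+\sqrt{4r+1}}{2r}\bigr\}$ with multiplicity $\mu_1=r-1$ in $\spec{\wh K_r^\mp}$, and $\Lambda_2=\{1\pm\frac1r\}$ with multiplicity $\mu_2=r$ in $\spec{K_{r+1}^\mp}$ (upper signs for the standard, lower for the signless Laplacian). The hypotheses of \Thm{main.idea} then reduce to short checks. Condition~\itemref{main.idea.b} holds because $\mu_1=r-1>r-s=t_1$ (as $s\ge2$) and $\mu_2=r>s-1=t_2$ (as $s\le r-1$). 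Condition~\itemref{main.idea.c} holds since equating the element of $\Lambda_2$ with either element of $\Lambda_1$ forces $\sqrt{4r+1}=1$, impossible for $r\ge4$; one checks likewise that $0$ (resp.\ $2$) lies in neither set. Finally $(\mu_1-t_1)\card{\Lambda_1}+(\mu_2-t_2)\card{\Lambda_2}=(s-1)\cdot2+(r-s+1)\cdot1=r+s-1$, which is exactly one short of $\card{\wh K_r(A)}=r+s$. The single missing eigenvalue is the simple eigenvalue $0$ (resp.\ $2$) carried by the constant function, guaranteed by connectedness of $\wh K_r(A)$, so I would invoke the relaxation in \Remenum{main.idea}{one.ev.missing} to conclude that $\spec{\wh K_r(A)}$ is completely determined by $r$ and $s$. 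Running the same argument verbatim for $B$ yields $\spec{\wh K_r(A)}=\spec{\wh K_r(B)}$ for both the standard and the signless Laplacian.

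For non-isomorphism I would compare degree lists. In $\wh K_r(A)$ every original $K_r$-vertex retains degree $(r-1)+1=r$, while the class formed by merging $a_j$ pendants has degree $a_j$; since $s\ge2$ each part satisfies $a_j\le r-1<r$, so $\deg \wh K_r(A)$ is the sorted multiset $\{a_1,\dots,a_s\}$ followed by $r$ repeated $r$ times. As $A\ne B$ as multisets and all parts lie strictly below $r$, the degree lists differ and the graphs are non-isomorphic. The main obstacle is not any single verification but rather spotting the correct \emph{second} auxiliary graph: bracketing by $\wh K_r$ alone pins down only $2(s-1)$ eigenvalues, and one needs the complementary bracketing by $K_{r+1}$ to exhaust the spectrum so that the partition data cancels and isospectrality emerges.
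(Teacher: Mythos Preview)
Your proposal is correct and follows essentially the same approach as the paper: the same two auxiliary graphs $G_1=\wh K_r$ and $G_2=K_{r+1}$, the same eigenvalue sets $\Lambda_1,\Lambda_2$ from \Lem{lambda.mult}, the same shrinking numbers $t_1=r-s$ and $t_2=s-1$, the same count $r+s-1$ leaving one eigenvalue to be supplied by \Remenum{main.idea}{one.ev.missing}, and the same degree-list argument for non-isomorphism. If anything, you are slightly more explicit than the paper in verifying condition~\itemref{main.idea.b} and the disjointness condition~\itemref{main.idea.c}.
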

\begin{proof}
  We apply \Thm{main.idea} and show that the spectrum of
  $G :=\wh{K}_r(A)$ depends \emph{only} on $r$ and the length $s$
  of the partition. We choose as first bracketing graph the decorated
  complete graph $G_1=\wh{K}_r$ and as spectral set
  \begin{equation*}
    \Lambda_1
    =\Bigl\lMset
      \frac{(2r \pm 1-\sqrt{4r+1})}{2r},
      \frac{(2r \pm 1+\sqrt{4r+1})}{2r}
    \Bigr\rMset.
  \end{equation*}
  From \Lemenum{lambda.mult}{lambda.mult.d} we obtain
  $\Lambda_1^{(r-1)}\subset \spec{G_1^\mp}$ and hence,
  $\mu_1=r-1$. Since $\wh{K}_r(A)=\wh{K}_r/{\sim}_A$ has
  shrinking number $t_1:=r-s$ we obtain from \Prp{homomorphism} that
  \begin{equation*}
    \wh{K}_r^\mp\less\wh{K}_r(A)^\mp \less[r-s]\wh{K}_r^\mp.
  \end{equation*}

  For the second bracketing graph we consider $G_2=K_{r+1}$ and
  the spectral set $\Lambda_2:=\{1 \pm \nicefrac1r\}$ which has
  multiplicity $\mu_2:=r$ in $\spec{G_2^\mp}$.  Note that $K_{r+1}$
  can be obtained from $\wh{K}_r(A)$ by contracting the $s$
  vertices obtained by the contraction of the pendant vertices of
  $\wh{K}_r$ given by the relation $\sim_A$. The shrinking number
  of this process is $t_2=s-1$ and hence
  $\wh{K}_r(A)^\mp \less K_{r+1}^\mp \less[s-1] \wh{Gr
    K}_r(A)^\mp$, or, equivalently,
  \begin{equation*}
    K_{r+1}^\mp\less[s-1] \wh{K}_r(A)^\mp \less K_{r+1}^\mp.
  \end{equation*}

  It is clear that the spectral subsets $\Lambda_1$ and $\Lambda_2$
  are disjoint.  Moreover, we have already detected
  \begin{equation*}
    (\mu_1-t_1)\abs{\Lambda_1}
    + (\mu_2-t_2)\abs{\Lambda_2}
    =\underbrace{((r-1)-(r-s))}_{=s-1}\cdot 2
    + \underbrace{(r-(s-1))}_{r-s+1} \cdot 1
    = r+s -1
    = \abs{\wh{K}_r(A)}-1
  \end{equation*}
  eigenvalues, hence the spectrum of the Laplacian
  $\lapl{\wt {K}_r(A)}$ is determined by \Thm{main.idea} and
  \Remenum{main.idea}{one.ev.missing}.

  Since the values in $\Lambda_1$ and $\Lambda_2$ as well as the
  multiplicities depend only on $r$ and the length $s$ and not on the
  concrete partition $A$ or $B$ we obtain
  $\spec{\wh{K}_r(A)^\mp}=\spec{\wh{K}_r(B)^\mp}$ as claimed.

  Finally, the degree lists of $\wh{K}_r(A)$ and $\wh{K}_r(B)$
  are given respectively by
  \begin{equation*}
    \lMset r^{(r)} \rMset \uplus A \quadtext{and}
    \lMset r^{(r)} \rMset \uplus B,
  \end{equation*}
  where the union of multisets takes multiplicities into
  account. Since the partitions $A$ and $B$ are different we conclude
  that the corresponding graphs cannot be isomorphic.
\end{proof}

\begin{corollary}
  \label{cor:fuzzy-ball}
  The spectrum of $\wh{K}_r(A)^\mp$ and $\wh{K}_r(B)^\mp$ is given by
  \begin{equation*}
    \spec{\wh{K}_r(A)^\mp}
    =  \spec{\wh{K}_r(B)^\mp}
    = \Bigl\lMset
    \frac{2r \pm 1-\sqrt{4r+1}}{2r},
    \frac{2r \pm 1+\sqrt{4r+1}}{2r}
    \Bigr\rMset^{(s-1)}
    \cup
    \Bigl\lMset
    1 \pm \frac1r
    \Bigr\rMset^{(r-s+1)}
    \cup \lMset \lambda_*^\pm \rMset,
  \end{equation*}
  where powers $(\cdot)^{(\mu)}$ indicate multiplicity.  The remaining
  eigenvalue is $\lambda_*^\pm=1\pm 1$, i.e., $0$ for the standard and
  $2$ for the signless Laplacian.
\end{corollary}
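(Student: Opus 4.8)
The plan is to read off the corollary as a bookkeeping refinement of the proof of \Thm{fuzzy-ball}: that proof already exhibits the full spectrum through \Thm{main.idea}, so the only work left is to make the multiplicities and the single remaining eigenvalue explicit. Write $G:=\wh{K}_r(A)$; the argument for $B$ is verbatim the same, and both yield the same list because $\Lambda_1,\Lambda_2$ and the associated multiplicities depend only on $r$ and the length $s$, not on the concrete partition (this is precisely the isospectrality already proved in \Thm{fuzzy-ball}). Recalling the two bracketings
\begin{equation*}
  \wh{K}_r^\mp\less G^\mp \less[r-s]\wh{K}_r^\mp
  \qquadtext{and}
  K_{r+1}^\mp\less[s-1] G^\mp \less K_{r+1}^\mp,
\end{equation*}
\Thm{main.idea} (via \Prp{mult.spec}) gives $\Lambda_1^{(s-1)}\subset\spec{G^\mp}$ with $\mu_1-t_1=(r-1)-(r-s)=s-1$ and $\Lambda_2^{(r-s+1)}\subset\spec{G^\mp}$ with $\mu_2-t_2=r-(s-1)=r-s+1$, where $\Lambda_1$ is the two-element set of \Lemenum{lambda.mult}{lambda.mult.d} and $\Lambda_2=\lMset 1\pm\nicefrac1r\rMset$ the singleton of \Lemenum{lambda.mult}{lambda.mult.c} (for $K_{r+1}$). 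These are exactly the first two multisets in the asserted spectrum.

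Next I would count the eigenvalues located so far: since $\card{\Lambda_1}=2$ and $\card{\Lambda_2}=1$, we have detected $2(s-1)+(r-s+1)=r+s-1$ eigenvalues with multiplicity, exactly one short of the order $\card{G}=r+s$ of the quotient. To pin down the last eigenvalue I would invoke \Remenum{main.idea}{one.ev.missing}: the constant function is always an eigenfunction, contributing the eigenvalue $0$ to the standard Laplacian and $2$ to the signless one, and since $G$ is connected this eigenvalue is simple (cf.~\Remenum{DML}{spec.0.2}). It then remains to check that this value is genuinely new, i.e.\ lies in neither $\Lambda_1$ nor $\Lambda_2$; this is a one-line algebraic verification, since $0\in\Lambda_1$ (resp.\ $2\in\Lambda_1$) would force $\sqrt{4r+1}=2r+1$, hence $r=0$, whereas the single element $1\pm\nicefrac1r$ of $\Lambda_2$ lies strictly in $(0,2)$ for $r\ge4$. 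Equivalently, one may fix the last eigenvalue by the trace identity $\sum_k\lambda_k(G^\mp)=\tr\lapl{G^\mp}=\card G$ from \Remenum{main.idea}{one.ev.missing}.

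Having established $\Lambda_1^{(s-1)}\cup\Lambda_2^{(r-s+1)}\cup\lMset\lambda_*^\pm\rMset\subset\spec{G^\mp}$ as a multiset of exactly $\card G=r+s$ elements, I would conclude by the exhaustion argument of \Thm{main.idea} that this inclusion is an equality, giving the stated spectrum with $\lambda_*^\pm=1\pm1$ (that is, $0$ for the standard and $2$ for the signless Laplacian). I do not expect a genuine obstacle here: the substantive content lives in \Thm{fuzzy-ball}, and the only new points are the identification of the remaining eigenvalue and the elementary check that it does not collide with the two high-multiplicity families $\Lambda_1,\Lambda_2$.
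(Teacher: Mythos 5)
Your proposal is correct and follows essentially the same route as the paper: the corollary is stated there without separate proof precisely because it is the bookkeeping content of the proof of \Thm{fuzzy-ball} combined with \Remenum{main.idea}{one.ev.missing}, which is exactly what you reconstruct (same bracketings, same sets $\Lambda_1,\Lambda_2$, same multiplicities $s-1$ and $r-s+1$, and the last eigenvalue $1\pm1$ recovered via the constant eigenfunction or the trace identity). Your added one-line verification that $\lambda_*^\pm\notin\Lambda_1\cup\Lambda_2$ is a small but welcome explicit check that the paper leaves implicit.
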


For the next result we need to mention the notion of metric graph.
Recall that any discrete graph $G$ has an associated equilateral
metric graph which we denote here by $\overline{G}$ (see
\cite{lledo-post:08b,berko-kuch:13,kurasov-muller:21} and references
cited therein).  Recall that the standard Laplacian (sometimes also
called ``Kirchhoff'') is an unbounded second order differential
operator on the metric graph with standard conditions on the vertices
(the sum of the in-derivatives equals the sum of the out-derivatives)
which guarantee that the operator is self-adjoint.

We observe that the construction of isospectral graphs as in
\Thm{fuzzy-ball} produces also isospectral equilateral metric graphs
for the corresponding Kirchhoff Laplacians.  The proof is based on a
beautiful relation between the spectra of the standard Laplacian and
the Kirchhoff Laplacian (see, e.g.~\cite{berko-kuch:13,lledo-post:08b}
and references therein).
\begin{corollary}
  \label{cor:metric-1}
  Let $A$ and $B$ are two different $s$-partitions of $r$ and denote
  by $\overline{K}_r(A)$ and $\overline{K}_r(B)$ the
  equilateral metric graph associated with the discrete graphs
  $\wh{K}_r(A)$ and $\wh{K}_r(B)$.  Then the graphs
  $\overline{K}_r(A)$ and $\overline{K}_r(B)$ are
  non-isomorphic and isospectral for the standard metric Laplacian.
\end{corollary}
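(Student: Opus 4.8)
The plan is to transfer the discrete isospectrality of \Thm{fuzzy-ball} to the metric setting via the classical correspondence between the standard discrete Laplacian on a graph $G$ and the Kirchhoff Laplacian on the associated equilateral metric graph $\overline{G}$ (see \cite{lledo-post:08b,berko-kuch:13}). I would first recall that this correspondence splits the metric spectrum into two pieces. The non-Dirichlet part is in a multiplicity-preserving bijection with $\spec{\lapl G}$ through an explicit relation of the form $\lambda = 1 - \cos(\ell k)$, where $\ell$ is the common edge length and $k^2$ is the metric eigenvalue; hence it is completely determined by the discrete spectrum. The Dirichlet part consists of the eigenvalues $k^2$ with $\ell k \in \pi\Z\setminus\{0\}$ whose eigenfunctions vanish at all vertices, and its multiplicities depend only on the combinatorial data $\card V$, $\card E$, the number of connected components $c$, and (through the parity of $m$ in $\ell k = m\pi$) on bipartiteness.

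Since $\wh{K}_r(A)$ and $\wh{K}_r(B)$ are isospectral for the discrete standard Laplacian by \Thm{fuzzy-ball}, the non-Dirichlet parts of $\spec{\overline{K}_r(A)}$ and $\spec{\overline{K}_r(B)}$ agree automatically. Moreover $\card V$ (the length of the spectral list), $c$ (the multiplicity of the eigenvalue $0$) and bipartiteness (whether $2$ lies in the spectrum) are all read off from the discrete spectrum, so these agree as well. The only invariant entering the Dirichlet part that is \emph{not} a priori determined by the normalised spectrum is the number of edges $\card E$, equivalently the first Betti number $\beta = \card E - \card V + c$; verifying $\card{E(\wh{K}_r(A))}=\card{E(\wh{K}_r(B))}$ is therefore the crux of the argument.

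This I would settle by a direct count: contracting pendant vertices attached to \emph{distinct} vertices of $K_r$ neither removes edges nor produces loops or multiple edges, so the edge count is left unchanged by $\sim_A$ and $\sim_B$ and equals $\binom r2 + r = \tfrac{r(r+1)}2$ for both, independently of the partition. Hence $\card E$, and with it $\beta$, agree; the Dirichlet parts coincide; and combining both pieces via the correspondence yields $\spec{\overline{K}_r(A)} = \spec{\overline{K}_r(B)}$ for the Kirchhoff Laplacian, which is the claimed isospectrality.

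For non-isomorphism I would argue that the equilateral metric graph remembers the partition once the metrically invisible degree-$2$ vertices are suppressed: a block of size $1$ leaves a pendant edge, a block of size $2$ leaves a length-$2$ connection between two core vertices (running parallel to the length-$1$ edge of $K_r$ joining them), and a block of size $a_i \ge 3$ leaves a surviving vertex of degree $a_i$. Counting pendant edges, length-$2$ parallel connections, and the degrees of the surviving decoration vertices recovers the multiset $A$ as an isometry invariant, so $A \ne B$ forces $\overline{K}_r(A)$ and $\overline{K}_r(B)$ to be non-isometric. The main obstacle here is conceptual rather than computational: discrete isospectrality does \emph{not} by itself give metric isospectrality, since the normalised Laplacian spectrum fails to determine $\card E$ in general, and one must isolate and check exactly this one combinatorial datum.
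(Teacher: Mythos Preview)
Your proof is correct and follows essentially the same route as the paper's sketch: both reduce metric isospectrality to discrete isospectrality (already secured by \Thm{fuzzy-ball}) together with equality of the edge counts, invoking the standard discrete--metric spectral correspondence from~\cite{lledo-post:08b,berko-kuch:13}, and both observe that contracting pendant vertices attached to distinct core vertices leaves the edge set untouched. Your treatment is more explicit than the paper's (you spell out the Dirichlet/non-Dirichlet decomposition and the exact combinatorial data governing each piece), and in particular you give an actual argument for metric non-isomorphism by passing to the minimal model and recovering the partition $A$ as an isometry invariant; the paper's sketch does not address this point at all and tacitly relies on the discrete non-isomorphism, which, as you rightly note, is not automatic because degree-$2$ vertices are metrically invisible.
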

\begin{proof}[Sketch of the proof]
  We prove in~\cite{fclp:22c} that $\overline G_1$ and
  $\overline G_2$ are isospectral with respect to the standard metric
  Laplacian if and only if $G_1$ and $G_2$ are isospectral with
  respect to the standard discrete Laplacian and if $G_1$ and $G_2$
  have the same number of edges.  A similar statement is stated
  in~\cite[Proposition~1]{kurasov-muller:21}. (Note that isospectrality
  guarantees that the number of vertices is the same and, therefore, preservation
  of the number of edges guarantees that the corresponding Betti numbers are also the same.)
  Note that this follows for eigenvalues $\lambda \in (0,2)$ by e.g.~\cite[Proposition~4.1, 4.7
  and~5.2]{lledo-post:08b}.  Recall that the bipartiteness of a graph
  can be seen from the spectrum of the standard discrete Laplacian (a
  connected graph is bipartite if and only if $2$ is in its spectrum).

  The claim on isospectral metric graphs follows from the fact that in
  all our constructions of isospectral graphs using vertex
  contraction, the number of edges is the same.
\end{proof}

\begin{remark}
  Note that the splitted complete graphs $K_5$ in Figure~1 of
  \cite{kurasov-muller:21} fall into the class of fuzzy balls. In
  fact, they correspond to $\wh{K}_4(A)$ and $\wh{K}_4(B)$
  with partitions of $r=4$ given by $A=(2,2)$ and $B=(1,3)$. Therefore
  the corresponding equilateral metric graphs $\overline{K}_4(A)$
  and $\overline{K}_4(B)$ will also be isospectral for the
  corresponding Kirchhoff Laplacian.
\end{remark}

\begin{example}
  We illustrate here in a concrete example with $r=6$ and $s=3$
  (cf. \Fig{fuzzy-ball.ex}) how to apply directly and with the help of
  diagrams the spectral bracketing technique developed in the proof
  \Thm{fuzzy-ball}.  Recall that the lowest eigenvalue $0$ is always
  included in all spectra.

  The first bracketing here arises from
  \begin{equation*}
    \wh{K}_6\less \wh{K}_6(A) \less[3]\wh{K}_6.
  \end{equation*}
  with shrinking number $t_1:=r-s=3$. Hence, for the eigenvalues we have
  \vspace{2mm}
  \begin{center}
    \begin{tabular}{lp{3ex}p{3ex}p{3ex}%
      |p{3ex}|>{\columncolor[gray]{0.75}}p{3ex}%
      |>{\columncolor[gray]{0.75}}p{3ex}%
      |p{3ex}|p{3ex}|p{3ex}%
      |p{3ex}|>{\columncolor[gray]{0.75}}p{3ex}%
      |>{\columncolor[gray]{0.75}}p{3ex}%
      |p{3ex}p{3ex}p{3ex}}
      \hhline{~~~~------------}
      $\wh{K}_6$& & &
      & 0 & $\nicefrac23$ & $\nicefrac23$ %
      & $\nicefrac23$ & $\nicefrac23$ &$\nicefrac23$
      & $*$ & $\nicefrac32$ &$\nicefrac32$
      & \multicolumn{1}{p{3ex}|}{$\nicefrac32$}
      & \multicolumn{1}{p{3ex}|}{$\nicefrac32$}
      & \multicolumn{1}{p{3ex}|}{$\nicefrac32$}
      \\\hhline{~~~~------------}
      $\wh{K}_6(A)$ &&&  %
      & 0 & $\nicefrac23$ & $\nicefrac23$ %
      & ? & ? & ?
      & ? & $\nicefrac32$ & $\nicefrac32$
      \\\hhline{~------------}
      $\wh{K}_6$
      & \multicolumn{1}{|p{3ex}}{0}
      & \multicolumn{1}{|p{3ex}|}{$\nicefrac23$}
      & $\nicefrac23$
      & $\nicefrac23$ & $\nicefrac23$ & $\nicefrac23$
      & $*$ & $\nicefrac 32$ & $\nicefrac 32$ %
      & \multicolumn{1}{p{3ex}|}{$\nicefrac32$}
      & \multicolumn{1}{>{\columncolor[gray]{0.75}}p{3ex}|}{$\nicefrac32$}
      & \multicolumn{1}{>{\columncolor[gray]{0.75}}p{3ex}|}{$\nicefrac32$}
      \\\hhline{~------------}
    \end{tabular}
  \end{center}
   \vspace{2mm}
  where the simple eigenvalue $*=\nicefrac 76$ of
  $\wh {K}_6$ is useless for this bracketing and the question
  marks are the four eigenvalues that still need to be determined.
  Note that $\nicefrac 23$ and $\nicefrac 32$ are eigenvalues of
  $\wh {K}_6$ both with multiplicity $\mu_1-(r-s)=5-(6-3)=2$ and
  determine through enclosure four eigenvalues of $\wh{K}_6(A)$
  (light gray in the diagram).

  To determine the remaining eigenvalues we consider a second
  bracketing by choosing $G_2=K_7$ and the spectral set
  $\Lambda_2:=\{\nicefrac76\}$ which has multiplicity $\mu_2:=6$ in
  $\spec{G_2}$. The shrinking number is $t_2=2$ and hence
  \begin{equation*}
    K_7\less[2] \wh{K}_6(A) \less K_7.
  \end{equation*}
  These relations can be represented diagrammatically for the
  eigenvalues as
   \vspace{2mm}
  \begin{center}
    \begin{tabular}{lp{3ex}p{3ex}%
      |p{3ex}%
      |>{\columncolor[gray]{0.6}}p{3ex}%
      |>{\columncolor[gray]{0.6}}p{3ex}%
      |>{\columncolor[gray]{0.6}}p{3ex}%
      |>{\columncolor[gray]{0.6}}p{3ex}%
      |p{3ex}p{3ex}}
      \hhline{~~~-------}
      $K_7$ &&
      & $0$ & $\nicefrac76$ & $\nicefrac76$ %
      & $\nicefrac76$ & $\nicefrac76$%
      & \multicolumn{1}{p{3ex}|}{$\nicefrac76$}
      & \multicolumn{1}{p{3ex}|}{$\nicefrac76$}
      \\\hhline{~---------}
      $\wh{K}_6(A)$ %
      & \multicolumn{1}{|p{3ex}}{0}
      & \multicolumn{1}{|p{3ex}|}{$\nicefrac23$}
      & $\nicefrac23$ %
      & $\nicefrac76$ & $\nicefrac76$ & $\nicefrac76$
      & $\nicefrac76$
      & \multicolumn{1}{p{3ex}|}{$\nicefrac32$}
      & \multicolumn{1}{p{3ex}|}{$\nicefrac32$}
      \\\hhline{~---------}
      $K_7$
      & \multicolumn{1}{|p{3ex}}{$0$}
      & \multicolumn{1}{|p{3ex}|}{$\nicefrac76$}
      & $\nicefrac76$ & $\nicefrac76$ & $\nicefrac76$
      & $\nicefrac76$ & $\nicefrac 76$ %
      \\\hhline{~-------~~}
    \end{tabular}
  \end{center}
   \vspace{2mm}
  Therefore the remaining four eigenvalues are equal to
  $\nicefrac76$ since multiplicity is given by $\mu_2-(s-1)=6-(3-1)=4$
  (dark grey).  Altogether we have that both bracketings determine the
  complete spectrum of $\wh {K}_6(A)$:
   \vspace{2mm}
  \begin{center}
    \begin{tabular}{l%
      |>{\columncolor[gray]{1}}p{3ex}%
      |>{\columncolor[gray]{0.75}}p{3ex}%
      |>{\columncolor[gray]{0.75}}p{3ex}%
      |>{\columncolor[gray]{0.6}}p{3ex}%
      |>{\columncolor[gray]{0.6}}p{3ex}%
      |>{\columncolor[gray]{0.6}}p{3ex}%
      |>{\columncolor[gray]{0.6}}p{3ex}%
      |>{\columncolor[gray]{0.75}}p{3ex}%
      |>{\columncolor[gray]{0.75}}p{3ex}|}
      \hhline{~---------}
      & $0$ & $\nicefrac23$ & $\nicefrac23$ %
      & $\nicefrac76$ & $\nicefrac76$ & $\nicefrac76$
      & $\nicefrac76$ & $\nicefrac32$ & $\nicefrac32$
      \\\hhline{~---------}
    \end{tabular}
  \end{center}
   \vspace{2mm}
  Note that in the reasoning only $r=6$ and the length of the
  partition $s=3$ matter and so the two different partitions
  $A=\lMset 2,2,2\rMset$ and $B=\lMset 1,2,3 \rMset$ lead to
  isospectral graphs $\wh{K}_6(A)$ and $\wh{Gr
    K}_6(B)$. Moreover, the corresponding degree lists
  \begin{equation*}
    \deg \wh{K}_6(A)=(\mathbf 2,\mathbf2,\mathbf2,6,6,6,6,6,6)
    \qquadtext{and}
    \deg \wh{K}_6(B)=(\mathbf1,\mathbf2,\mathbf3,6,6,6,6,6,6)
  \end{equation*}
  are different and, hence, the graphs are not isomorphic.

  Finally, for the signless Laplacian one can reason similarly
  choosing the same auxiliary graphs and replacing the eigenvalues $0$
  by $2$ (constant eigenfunction), $\nicefrac23$ by $\nicefrac12$,
  $\nicefrac32$ by $\nicefrac43$ and $\nicefrac76$ by $\nicefrac56$,
  respectively.
\end{example}

\subsection{The fuzzy complete bipartite construction}
\label{subsec:complete-bipartite}
In this section we will show a different class of examples leading to
isospectral graphs. Instead of following the lemma-theorem structure
of the preceding subsection we will directly reason using spectral
diagrams in concrete examples that can be generalised in an obvious
way.

For $p, r \in \N$, let $K_{p,r}$ be the complete bipartite graph
with $p+r$ vertices and denote by $\wh {K}_{p,r}$ the graph with
$p+2r$ vertices obtained from $K_{p,r}$ by attaching a pendant
edge to each of the $r$ vertices in the partition of $K_{p,r}$.
Note since the resulting classes of graphs in this subsection will be
bipartite there is no need to distinguish between Laplacian and
signless Laplacian as they have the same spectrum.  This is due to the
fact that on a bipartite graph, the signatures $1$ and $-1$ on each
edge are gauge-equivalent (see e.g.~\cite[Sec.~3.1]{fclp:18}.  We now
construct the isospectral graphs from an $s$-partition $A$ of the
natural number $r$, i.e., $A=(a_1,\dots,a_r)$.  We start with the
decorated complete bipartite graph $\wh{K}_{p,r}$ (decorated with
$r$ pendant edges numbered by $\{v_1,\dots,v_r\}$).  Let
$A=\lMset a_1,\dots,a_s \rMset$ be an $s$-partition of $r \in \N$.
Consider the equivalence $\sim_A$ which contracts the set of vertices
$\{v_1,\dots,v_{a_1}\}$, $\{v_{a_1+1},\dots,v_{a_1+a_2}\}$, $\dots$,
$\{v_{a_1+\dots+a_{n-1}+1},\dots,v_r\}$ into $s$ vertices and denote
the corresponding quotient graph by
$\wh {K}_{p,r}(A)=\quotient[A]{\wh{K}_{p,r}}$.

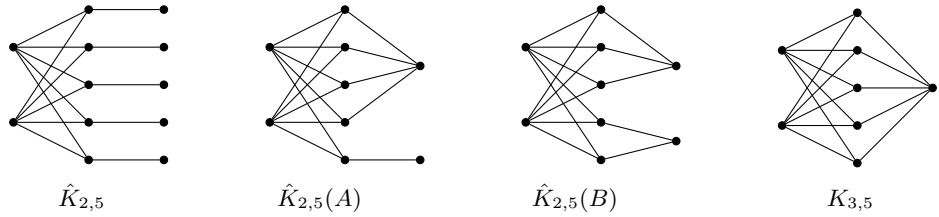
\begin{figure}[h]
  \centering {
    \begin{tikzpicture}[auto, vertex/.style={circle,draw=black!100,%
        fill=black!100, 
        inner sep=0pt,minimum size=1mm},scale=1] %
      \foreach \r in {1,2,3,4,5} %
      { %
        \node (C\r) at (0,\r/2) [vertex,
        label=below:] {}; %
        \node (B\r) at (1,\r/2) [vertex,label=below:] {};%
        \path [-](C\r) edge node[right] {} (B\r);%
      }%

      \foreach \p in {2,4} %
      { %
        \node (D\p) at (-1,\p/2) [vertex,
        label=below:] {}; %
      }%

      \foreach \r in {1,2,3,4,5} %
      { %
        \foreach \p in {2,4} %
        { %
          \path [-](C\r) edge node[right] {} (D\p);%
        }%
      }%

      \draw[] (0.35,0)node[left] {\small$\hat{K}_{2,5}$};
    \end{tikzpicture} %
    \quad \quad \quad
    \begin{tikzpicture}[auto, vertex/.style={circle,draw=black!100,%
        fill=black!100, 
        inner sep=0pt,minimum size=1mm},scale=1] %
      \foreach \r in {1,2,3,4,5} %
      { %
        \node (C\r) at (0,\r/2) [vertex,
        label=below:] {}; %
      }%
      \foreach \s in {2,7} %
      { %
        \node (E\s) at (1,\s/4) [vertex,
        label=below:] {}; %
      }
      \foreach \r in {2,3,4,5}  {\path [-](C\r) edge node[right] {} (E7);}
      \foreach \r in {1}  {\path [-](C\r) edge node[right] {} (E2);}

      \foreach \p in {2,4} %
      { %
        \node (D\p) at (-1,\p/2) [vertex,
        label=below:] {}; %
      }%

      \foreach \r in {1,2,3,4,5} %
      { %
        \foreach \p in {2,4} %
        { %
          \path [-](C\r) edge node[right] {} (D\p);%
        }%
      }%

      \draw[] (0.35,0)node[left] {\small$\hat{K}_{2,5}(A)$};
    \end{tikzpicture}
    \quad \quad \quad
    \begin{tikzpicture}[auto, vertex/.style={circle,draw=black!100,%
        fill=black!100, 
        inner sep=0pt,minimum size=1mm},scale=1] %
      \foreach \r in {1,2,3,4,5} %
      { %
        \node (C\r) at (0,\r/2) [vertex,
        label=below:] {}; %
      }%
      \foreach \s in {3,7} %
      { %
        \node (E\s) at (1,\s/4) [vertex,
        label=below:] {}; %
      }
      \foreach \r in {3,4,5}  {\path [-](C\r) edge node[right] {} (E7);}
      \foreach \r in {1,2}  {\path [-](C\r) edge node[right] {} (E3);}

      \foreach \p in {2,4} %
      { %
        \node (D\p) at (-1,\p/2) [vertex,
        label=below:] {}; %
      }%

      \foreach \r in {1,2,3,4,5} %
      { %
        \foreach \p in {2,4} %
        { %
          \path [-](C\r) edge node[right] {} (D\p);%
        }%
      }%

      \draw[] (0.35,0)node[left] {\small$\hat{K}_{2,5}(B)$};
    \end{tikzpicture}
    \quad \quad \quad
    \begin{tikzpicture}[auto, vertex/.style={circle,draw=black!100,%
        fill=black!100, 
        inner sep=0pt,minimum size=1mm},scale=1] %
      \foreach \r in {1,2,3,4,5} %
      { %
        \node (C\r) at (0,\r/2) [vertex,
        label=below:] {}; %
      }%
      \foreach \s in {6} %
      { %
        \node (E\s) at (1,\s/4) [vertex,
        label=below:] {}; %
      }
      \foreach \r in {1,2,3,4,5}  {\path [-](C\r) edge node[right] %
        {} (E6);}
      \foreach \p in {2,4} %
      { %
        \node (D\p) at (-1,\p/2) [vertex,
        label=below:] {}; %
      }%

      \foreach \r in {1,2,3,4,5} %
      { %
        \foreach \p in {2,4} %
        { %
          \path [-](C\r) edge node[right] {} (D\p);%
        }%
      }%

      \draw[] (0.35,0)node[left] {\small${K}_{3,5}$};
    \end{tikzpicture}

  } \caption{The figures above represent the auxiliary graph $\wh{K}_{2,5}$, the isospectral
    graphs $\wh{K}_{2,5}(A)$ and $\wh{K}_{2,5}(B)$ for
    $A=(4,1)$ and $B=(2,3)$, and, finally, the auxiliary graph
    ${K}_{3,5}$.  Note that $\wh{K}_{2,5}(A)$ and
    $\wh{K}_{2,5}(B)$ are obtained from $\wh{K}_{2,5}$ by
    contracting the pendant vertices according to $A$ and $B$, and
    that ${K}_{3,5}$ is obtained from either $\wh{K}_{2,5}(A)$
    or $\wh{K}_{2,5}(B)$ by contracting all former pendant
    vertices once again into one vertex.}
  \label{fig:complete-bipartite.ex}
\end{figure}

To illustrate the bracketing technique directly in terms of diagrams
we will consider a concrete example of this class by choosing the case
$p=2$, $r=5$ and $s=2$ (see \Fig{complete-bipartite.ex}). The method
works similarly for other cases of in this class.

For the first bracketing we use $G_1=\wh{K}_{2,5}$ as auxiliary
graph which, by construction, give the relations
\begin{equation*}
  \wh{K}_{2,5} \less\wh{K}_{2,5}(A) \less[3] \wh{K}_{2,5}.
\end{equation*}
with shrinking number $t_1:=r-s=3$. Diagrammatically, the eigenvalues satisfy
 \vspace{2mm}
\begin{center}
  \begin{tabular}{lp{3ex}p{3ex}p{3ex}%
    |p{3ex}|>{\columncolor[gray]{0.75}}p{3ex}%
    |
    p{3ex}%
    |p{3ex}|p{3ex}|p{3ex}%
    |p{3ex}|>{\columncolor[gray]{0.75}}p{3ex}%
    |
    p{3ex}%
    |p{3ex}p{3ex}p{3ex}}
    \hhline{~~~~------------}
    $\wh{K}_{2,5}$& & &
    & $0$ & $\nicefrac12$ & $\nicefrac12$ %
    & $\nicefrac12$ & $\nicefrac12$ &$*$
    & $*$ & $\nicefrac32$ &$\nicefrac32$
    & \multicolumn{1}{p{3ex}|}{$\nicefrac32$}
    & \multicolumn{1}{p{3ex}|}{$\nicefrac32$}
    & \multicolumn{1}{p{3ex}|}{$2$}
    \\\hhline{~~~~------------}
    $\wh{K}_{2,5}(A)$ &&&  %
      & 0 & $\nicefrac12$ & ? %
      & ? & ? & ?
      & ? & $\nicefrac32$ & 2
      \\\hhline{~------------}
      $\wh{K}_{2,5}$
      & \multicolumn{1}{|p{3ex}}{$0$}
      & \multicolumn{1}{|p{3ex}|}{$\nicefrac12$}
      & $\nicefrac12$
      & $\nicefrac12$ & $\nicefrac12$ & $*$
      & $*$ & $\nicefrac32$ & $\nicefrac 32$ %
      & \multicolumn{1}{p{3ex}|}{$\nicefrac32$}
      & \multicolumn{1}{>{\columncolor[gray]{0.75}}p{3ex}|}{$\nicefrac32$}
      & \multicolumn{1}{
        p{3ex}|}{$2$}
    \\\hhline{~------------}
  \end{tabular}
\end{center}
 \vspace{2mm}
where now the double eigenvalue $*=1$ is irrelevant for this
bracketing.  Note that we are able to enclose the two eigenvalues
$\nicefrac13$ and $\nicefrac32$, both with multiplicity
$\mu_1-(r-s)=4-(6-3)=1$ (light grey in the diagram).

To determine the remaining eigenvalues we consider a second bracketing
with the auxiliary graph $G_2=K_{3,5}$. Note that the eigenvalue
$1$ has multiplicity $\mu_2:=(p+1)+r-2=6$ in $\spec{G_2}$ and the
shrinking number is $t_2=s-1=1$.  Therefore we obtain the spectral
relations
\begin{equation*}
  K_{3,5} \less[1] \wh{K}_{2,5}(A) \less K_{3,5}.
\end{equation*}
which diagrammatically can be represented by
 \vspace{2mm}
\begin{center}
  \begin{tabular}{lp{3ex}
      |p{3ex}%
      |>{\columncolor[gray]{0.6}}p{3ex}%
      |>{\columncolor[gray]{0.6}}p{3ex}%
      |>{\columncolor[gray]{0.6}}p{3ex}%
      |>{\columncolor[gray]{0.6}}p{3ex}%
      |>{\columncolor[gray]{0.6}}p{3ex}%
      |p{3ex}p{3ex}}
      \hhline{~~--------}
      $K_{3,5}$ &&  $0$
      & $1$ & $1$ & $1$ %
      & $1$ & $1$%
      & \multicolumn{1}{p{3ex}|}{$1$}
      & \multicolumn{1}{p{3ex}|}{$2$}
      \\\hhline{~---------}
      $\wh{K}_{2,5}(A)$ %
      & \multicolumn{1}{|p{3ex}}{0}
      & \multicolumn{1}{|p{3ex}|}{$\nicefrac12$}
      & $1$ %
      & $1$ & $1$ & $1$
      & $1$
      & \multicolumn{1}{p{3ex}|}{$\nicefrac32$}
      & \multicolumn{1}{p{3ex}|}{2}
      \\\hhline{~---------}
      $K_{3,5}$
      & \multicolumn{1}{|p{3ex}}{$0$}
      & \multicolumn{1}{|p{3ex}|}{$1$}
      & $1$ & $1$ & $1$
      & $1$ & $1$
      & \multicolumn{1}{|p{3ex}|}{$2$}%
      \\\hhline{~--------~}
  \end{tabular}
\end{center}
 \vspace{2mm}
We have now fixed the eigenvalue $1$ with multiplicity
$\mu_2-t_2=6-1=5$ (dark grey in the diagram).

Altogether, the spectrum of $\wh {K}_{2,5}(A)$ is given by
 \vspace{2mm}
\begin{center}
    \begin{tabular}{l%
      |>{\columncolor[gray]{1}}p{3ex}%
      |>{\columncolor[gray]{0.75}}p{3ex}%
      |>{\columncolor[gray]{0.75}}p{3ex}%
      |>{\columncolor[gray]{0.6}}p{3ex}%
      |>{\columncolor[gray]{0.6}}p{3ex}%
      |>{\columncolor[gray]{0.6}}p{3ex}%
      |>{\columncolor[gray]{0.6}}p{3ex}%
      |>{\columncolor[gray]{0.75}}p{3ex}%
      |>{\columncolor[gray]{1}}p{3ex}|}
      \hhline{~---------}
      & $0$ & $\nicefrac12$ & $1$ %
      & $1$ & $1$ & $1$
      & $1$ & $\nicefrac32$ & $2$
      \\\hhline{~---------}
  \end{tabular}
\end{center}
 \vspace{2mm}
Again, the reasoning only depends on the values $p=2$, $r=6$ and the
length of the partition $s=2$ and so the two different partitions
$A=\{2,3\}$ and $B=\{1,4\}$ lead to isospectral graphs
$\wh{K}_{2,5}(A)$ and $\wh{K}_{2,5}(B)$. Since their degree
lists
\begin{equation*}
  \deg \wh{K}_{2,5}(A)=(\mathbf2,\mathbf3,3,3,3,3,3,5,5)
  \qquadtext{and}
  \deg \wh{K}_{2,5}(B)=(\mathbf1,3,3,3,3,3,\mathbf4,5,5)
\end{equation*}
are different the graphs are not isomorphic. To make contact with the
main result in \Thm{main.idea} recall that in the two bracketings used
in this class of examples we have for the first auxiliary graph
$G_1=\wh{K}_{2,5}$ the spectral set
$\Lambda_1=\left\{\nicefrac12,\nicefrac32 \right\}$ with
multiplicity $\mu_1=5-1=4$ and for the second auxiliary graph
$G_2={K}_{3,5}$ the spectral set is $\Lambda_2=\left\{ 1 \right\}$
with $\mu_2= 7 -1= 6$.

As in \Cor{metric-1} we can extend isospectrality to the metric graph
scenario. Let $A$ and $B$ are two different $s$-partitions of $r$ and
denoting by $\overline{K}_{p,r}(A)$ and
$\overline{K}_{p,r}(B)$ the equilateral metric graph associated
with the discrete graphs $\wh{K}_{r,p}(A)$ and
$\wh{K}_{p,r}(B)$.  Then the graphs $\overline{K}_{p,r}(A)$
and $\overline{K}_{p,r}(B)$ are non-isomorphic and isospectral for
the Kirchhoff Laplacian.

\subsection{Edge subdivision}
\label{subsec:example2}

For presenting the following class of examples we first need to recall
an important operation on a graph $G=(V,E)$. The edge subdivision of
$e=\{u,v\}\in E$ consists in the deletion of $e=\{u,v\}$ from $G$, the
addition of a new vertex $w$ and two new edges $e_1=\{u,w\}$ and
$e_2=\{w,v\}$.  We denote by $S(G)$ the graph where all edges of $G$
are subdivided. Let $\wh{K}_5$ be the complete graph with a
pendent edge decoration at each vertex of ${K}_5$ as in
\Fig{fuzzy-ball.ex} and consider the corresponding edge subdivision
graph shown in \Fig{final.ex}. Since all graphs involved are bipartite
and connected we will always have $0$ and $2$ as simple eigenvalues in
the corresponding spectra.

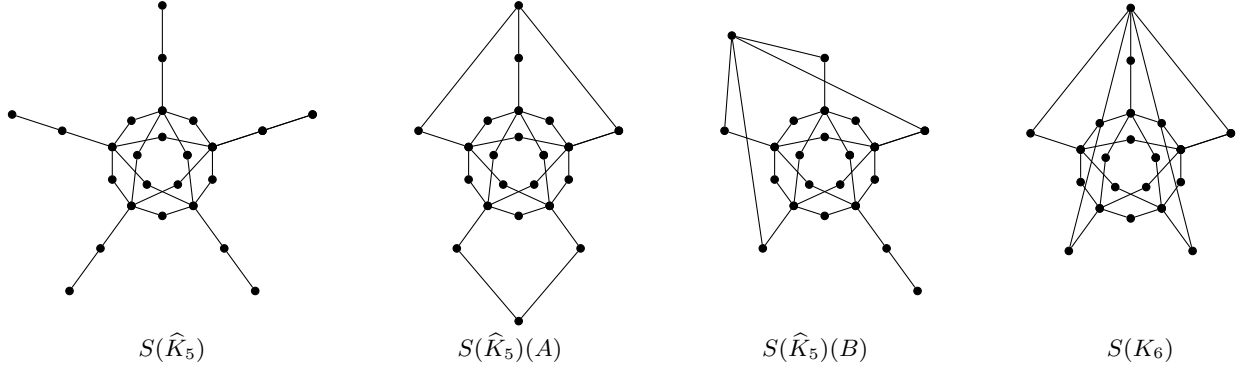
\begin{figure}[h]
  \centering {
    \begin{tikzpicture}[auto, vertex/.style={circle,draw=black!100,%
        fill=black!100, 
        inner sep=0pt,minimum size=1mm},scale=.7] %
      \foreach \x in {18,54,...,378} %
      { %
        \node (A\x) at ({cos(\x)},{sin(\x)}) [vertex,
        label=below:] {}; %

      }%

      \foreach \x in {18,90,...,378} %
      { %
        \node (A\x) at ({cos(\x)},{sin(\x)}) [vertex,
        label=below:] {}; %
        \node (B\x) at ({2*cos(\x)},{2*sin(\x)}) [vertex,label=below:] {};%
        \node (C\x) at ({3*cos(\x)},{3*sin(\x)}) [vertex,label=below:] {};%
        \path [-](A\x) edge node[right] {} (B\x);%
        \path [-](B\x) edge node[right] {} (C\x);%

      }%

      \foreach \x in {18,90,...,378} %
      { %
        \node (E\x) at ({.5*cos(\x)},{.5*sin(\x)}) [vertex,
        label=below:] {}; %

      }%

      \path [-](A18) edge node[right] {} (A54);
      \path [-](A54) edge node[right] {} (A90);
      \path [-](A90) edge node[right] {} (A126);
      \path [-](A126) edge node[right] {} (A162);
      \path [-](A198) edge node[right] {} (A162);
      \path [-](A198) edge node[right] {} (A234);
      \path [-](A234) edge node[right] {} (A270);
      \path [-](A270) edge node[right] {} (A306);
      \path [-](A306) edge node[right] {} (A342);
      \path [-](A342) edge node[right] {} (A378);

      \path [-](A18) edge node[right] {} (E90);
      \path [-](A162) edge node[right] {} (E90);
      \path [-](A162) edge node[right] {} (E234);
      \path [-](A306) edge node[right] {} (E234);
      \path [-](A306) edge node[right] {} (E18);
      \path [-](A90) edge node[right] {} (E18);
      \path [-](A90) edge node[right] {} (E162);
      \path [-](A234) edge node[right] {} (E162);
      \path [-](A234) edge node[right] {} (E306);
      \path [-](E306) edge node[right] {} (A18);
      \draw[] (1,-3.5)node[left] {\small $S(\wh{K}_5)$};
    \end{tikzpicture} %
    \quad \quad \quad
    \begin{tikzpicture}[auto, vertex/.style={circle,draw=black!100,%
        fill=black!100, 
        inner sep=0pt,minimum size=1mm},scale=.7] %
      \foreach \x in {18,54,...,378} %
      { %
        \node (A\x) at ({cos(\x)},{sin(\x)}) [vertex,
        label=below:] {}; %

      }%

      \foreach \x in {18,90,...,378} %
      { %
        \node (A\x) at ({cos(\x)},{sin(\x)}) [vertex,
        label=below:] {}; %
        \node (B\x) at ({2*cos(\x)},{2*sin(\x)}) [vertex,label=below:] {};%
        \path [-](A\x) edge node[right] {} (B\x);%

      }%

      \node (F1) at ({3*cos(90)},{3*sin(90)}) [vertex,
      label=below:] {}; %
      \path [-](F1) edge node[right] {} (B18);%
      \path [-](F1) edge node[right] {} (B90);%
      \path [-](F1) edge node[right] {} (B162);%

      \node (F2) at ({3*cos(270)},{3*sin(270)}) [vertex,
      label=below:] {}; %
      \path [-](F2) edge node[right] {} (B234);%
      \path [-](F2) edge node[right] {} (B306);%

      \foreach \x in {18,90,...,378} %
      { %
        \node (E\x) at ({.5*cos(\x)},{.5*sin(\x)}) [vertex,
        label=below:] {}; %

      }%

      \path [-](A18) edge node[right] {} (A54);
      \path [-](A54) edge node[right] {} (A90);
      \path [-](A90) edge node[right] {} (A126);
      \path [-](A126) edge node[right] {} (A162);
      \path [-](A198) edge node[right] {} (A162);
      \path [-](A198) edge node[right] {} (A234);
      \path [-](A234) edge node[right] {} (A270);
      \path [-](A270) edge node[right] {} (A306);
      \path [-](A306) edge node[right] {} (A342);
      \path [-](A342) edge node[right] {} (A378);

      \path [-](A18) edge node[right] {} (E90);
      \path [-](A162) edge node[right] {} (E90);
      \path [-](A162) edge node[right] {} (E234);
      \path [-](A306) edge node[right] {} (E234);
      \path [-](A306) edge node[right] {} (E18);
      \path [-](A90) edge node[right] {} (E18);
      \path [-](A90) edge node[right] {} (E162);
      \path [-](A234) edge node[right] {} (E162);
      \path [-](A234) edge node[right] {} (E306);
      \path [-](E306) edge node[right] {} (A18);
      \draw[] (1,-3.5)node[left] {\small $S(\wh{K}_5)(A)$};
    \end{tikzpicture} %
    \quad \quad \quad
    \begin{tikzpicture}[auto, vertex/.style={circle,draw=black!100,%
        fill=black!100, 
        inner sep=0pt,minimum size=1mm},scale=.7] %
      \foreach \x in {18,54,...,378} %
      { %
        \node (A\x) at ({cos(\x)},{sin(\x)}) [vertex,
        label=below:] {}; %

      }%

      \foreach \x in {18,90,...,378} %
      { %
        \node (A\x) at ({cos(\x)},{sin(\x)}) [vertex,
        label=below:] {}; %
        \node (B\x) at ({2*cos(\x)},{2*sin(\x)}) [vertex,label=below:] {};%
        \path [-](A\x) edge node[right] {} (B\x);%

      }%

      \node (F1) at ({3*cos(126)},{3*sin(126)}) [vertex,
      label=below:] {}; %
      \path [-](F1) edge node[right] {} (B18);%
      \path [-](F1) edge node[right] {} (B90);%
      \path [-](F1) edge node[right] {} (B162);%
      \path [-](F1) edge node[right] {} (B234);%

      \node (F2) at ({3*cos(306)},{3*sin(306)}) [vertex,
      label=below:] {}; %

      \path [-](F2) edge node[right] {} (B306);%

      \foreach \x in {18,90,...,378} %
      { %
        \node (E\x) at ({.5*cos(\x)},{.5*sin(\x)}) [vertex,
        label=below:] {}; %

      }%

      \path [-](A18) edge node[right] {} (A54);
      \path [-](A54) edge node[right] {} (A90);
      \path [-](A90) edge node[right] {} (A126);
      \path [-](A126) edge node[right] {} (A162);
      \path [-](A198) edge node[right] {} (A162);
      \path [-](A198) edge node[right] {} (A234);
      \path [-](A234) edge node[right] {} (A270);
      \path [-](A270) edge node[right] {} (A306);
      \path [-](A306) edge node[right] {} (A342);
      \path [-](A342) edge node[right] {} (A378);

      \path [-](A18) edge node[right] {} (E90);
      \path [-](A162) edge node[right] {} (E90);
      \path [-](A162) edge node[right] {} (E234);
      \path [-](A306) edge node[right] {} (E234);
      \path [-](A306) edge node[right] {} (E18);
      \path [-](A90) edge node[right] {} (E18);
      \path [-](A90) edge node[right] {} (E162);
      \path [-](A234) edge node[right] {} (E162);
      \path [-](A234) edge node[right] {} (E306);
      \path [-](E306) edge node[right] {} (A18);
      \draw[] (1,-3.5)node[left] {\small $S(\wh{K}_5)(B)$};
    \end{tikzpicture} %
    \quad \quad \quad
    \begin{tikzpicture}[auto, vertex/.style={circle,draw=black!100,%
        fill=black!100, 
        inner sep=0pt,minimum size=1mm},scale=.7] %
      \foreach \x in {18,54,...,378} %
      { %
        \node (A\x) at ({cos(\x)},{sin(\x)}) [vertex,
        label=below:] {}; %

      }%

      \foreach \x in {18,90,...,378} %
      { %
        \node (A\x) at ({cos(\x)},{sin(\x)}) [vertex,
        label=below:] {}; %
        \node (B\x) at ({2*cos(\x)},{2*sin(\x)}) [vertex,label=below:] {};%
        \path [-](A\x) edge node[right] {} (B\x);%

      }%

      \node (F1) at ({3*cos(90)},{3*sin(90)}) [vertex,
      label=below:] {}; %
      \path [-](F1) edge node[right] {} (B18);%
      \path [-](F1) edge node[right] {} (B90);%
      \path [-](F1) edge node[right] {} (B162);%
      \path [-](F1) edge node[right] {} (B234);%
      \path [-](F1) edge node[right] {} (B306);%

      \foreach \x in {18,90,...,378} %
      { %
        \node (E\x) at ({.5*cos(\x)},{.5*sin(\x)}) [vertex,
        label=below:] {}; %

      }%

      \path [-](A18) edge node[right] {} (A54);
      \path [-](A54) edge node[right] {} (A90);
      \path [-](A90) edge node[right] {} (A126);
      \path [-](A126) edge node[right] {} (A162);
      \path [-](A198) edge node[right] {} (A162);
      \path [-](A198) edge node[right] {} (A234);
      \path [-](A234) edge node[right] {} (A270);
      \path [-](A270) edge node[right] {} (A306);
      \path [-](A306) edge node[right] {} (A342);
      \path [-](A342) edge node[right] {} (A378);

      \path [-](A18) edge node[right] {} (E90);
      \path [-](A162) edge node[right] {} (E90);
      \path [-](A162) edge node[right] {} (E234);
      \path [-](A306) edge node[right] {} (E234);
      \path [-](A306) edge node[right] {} (E18);
      \path [-](A90) edge node[right] {} (E18);
      \path [-](A90) edge node[right] {} (E162);
      \path [-](A234) edge node[right] {} (E162);
      \path [-](A234) edge node[right] {} (E306);
      \path [-](E306) edge node[right] {} (A18);
      \draw[] (1,-3.5)node[left] {\small $S({K}_6)$};
    \end{tikzpicture} %
  } \caption{The figures above correspond to the auxiliary graph
    $S(\wh{K}_6)$, the isospectral graphs $S(\wh{K}_5)(A)$ and
    $S(\wh{K}_6)(B)$ for $A=(2,3)$ and $B=(1,4)$, and finally the
    auxiliary graph $S(K_6)$.  Note that $S(\wh{K}_6)(A)$ and
    $S(\wh{K}_6)(B)$ are obtained from $S(\wh{K}_5)$ by
    contracting the pendant vertices according to $A$ and $B$, and
    that $S(K_6)$ is obtained from either $S(\wh{K}_6)(A)$ or
    $S(\wh{K}_6)(B)$ by contracting all formerly pendant vertices
    once again into one vertex.  } \label{fig:final.ex}
\end{figure}
We will show using spectral diagrams that $S(\wh{K}_5)(A)$ and
$S(\wh{K}_5)(B)$ are isospectral. As before we consider the first
graph and to determine the nontrivial $20$ eigenvalues of
$S(\wh{K}_5)(A)$ we begin with the auxiliary graph
$S(\wh{K}_5)$. By construction we have the spectral relations
\begin{equation*}
  S(\wh{K}_5)\less S(\wh{K}_5)(A) \less[3]S(\wh{K}_5),
\end{equation*}
with shrinking number $t_1:=r-s=3$ using \Prp{homomorphism}. Hence,
for the corresponding eigenvalues we have the following spectral
diagram

\begin{table}[h]
  \resizebox{\textwidth}{!}{%
    \begin{tabular}{llrrr|r|
      >{\columncolor[gray]{0.75}}r |r|r|r|r|
      >{\columncolor[gray]{0.75}}r |r|r|r|
      >{\columncolor[gray]{0.75}}r |
      >{\columncolor[gray]{0.75}}r |r|r|r|
      >{\columncolor[gray]{0.75}}r |r|r|r|r|
      >{\columncolor[gray]{0.75}}r |r|rrr}
      \hhline{~~~~~-------------------------}
      $S(\wh{K}_5)$ &
      &
      &
      &
			&
			0 &
			$w_-$ &
			$w_-$ &
			$w_-$ &
			$w_-$ &
			$z_-$ &
			$\wh w_-$ &
			$\wh w_-$ &
			$\wh w_-$ &
			$\wh w_-$ &
			1 &
			1 &
			1 &
			1 &
			1 &
			$\wh w_+$ &
			$\wh w_+$ &
			$\wh w_+$ &
			$\wh w_+$ &
			$z_+$ &
			$w_+$ &
			$w_+$ &
			\multicolumn{1}{r|}{$w_+$} &
			\multicolumn{1}{r|}{$w_+$} &
                        \multicolumn{1}{r|}{2} \\
      \hhline{~~~~~-------------------------}
			$S(\wh{K}_5)(A)$ &
			&
			&
			&
			&
			0 &
			$w_-$ &
			? &
			? &
			? &
			? &
			$\wh w_-$ &
			? &
			? &
			? &
			1 &
			1 &
			? &
			? &
			? &
			$\wh w_+$ &
			? &
			? &
			? &
			? &
			$w_+$ &
			2 &
			&
			&
      \\
      \hhline{~~-------------------------}
			$S(\wh{K}_5)$ &
			\multicolumn{1}{l|}{} &
			\multicolumn{1}{r|}{0} &
			\multicolumn{1}{r|}{$w_-$} &
			$w_-$ &
			$w_-$ &
			$w_-$ &
			$z_-$ &
			$\wh w_-$ &
			$\wh w_-$ &
			$\wh w_-$ &
			$\wh w_-$ &
			1 &
			1 &
			1 &
			1 &
			1 &
			$\wh w_+$ &
			$\wh w_+$ &
			$\wh w_+$ &
			$\wh w_+$ &
			$z_+$ &
			$w_+$ &
			$w_+$ &
			$w_+$ &
			$w_+$ &
			2 &
			&
			&
      \\
      \hhline{~~-------------------------}
    \end{tabular}%
  }
\end{table}
\noindent where we denote for simplicity
$ w_\pm=1\pm \left(\frac{1}{2} \sqrt{\frac{1}{5}
    \left(9+\sqrt{21}\right)}\right)$, $\wh w_\pm=1\pm \left(\frac{1}{2}
  \sqrt{\frac{1}{5} \left(9-\sqrt{21}\right)}\right)$ and
$z_\pm=1\pm \sqrt{\frac{2}{5}}$.
This bracketing determines six eigenvalues (highlighted in light gray
in the diagram) where we exploit the fact that $w_\pm$ and
$\wh w_\pm$, both have multiplicity $4-(6-3)=1$ and $1$ has
multiplicity $5-(6-3)=2$.

To determine the remaining $14$ eigenvalues we take $G_2=S(K_6)$ as
an auxiliary graph. In this case the shrinking number is $t_2=1$ and
we obtain the relation
\begin{equation*}
	S(K_6)\less[1] S(\wh{K}_5)(A) \less S(K_6)
\end{equation*}
which can be represented by the following spectral diagram that fixes
the remaining eigenvalues which are highlighted in dark gray.
\vspace{1mm}
\begin{table}[h]
	{%
		\begin{tabular}{lll|
				>{\columncolor[gray]{0.75}}l |
				>{\columncolor[gray]{0.6}}l |
				>{\columncolor[gray]{0.6}}l |
				>{\columncolor[gray]{0.6}}l |
				>{\columncolor[gray]{0.6}}l |
				>{\columncolor[gray]{0.75}}l |
				>{\columncolor[gray]{0.6}}l |
				>{\columncolor[gray]{0.6}}l |
				>{\columncolor[gray]{0.6}}l |
				>{\columncolor[gray]{0.6}}l |
				>{\columncolor[gray]{0.6}}l |
				>{\columncolor[gray]{0.6}}l |
				>{\columncolor[gray]{0.6}}l |
				>{\columncolor[gray]{0.6}}l |
				>{\columncolor[gray]{0.75}}l |
				>{\columncolor[gray]{0.6}}l |
				>{\columncolor[gray]{0.6}}l |
				>{\columncolor[gray]{0.6}}l |
				>{\columncolor[gray]{0.6}}l |
				>{\columncolor[gray]{0.75}}l |l}
			\hhline{~~~---------------------}
			$S(K_6)$ &
			&
			&
			0 &
			$z_-$ &
			$z_-$ &
			$z_-$ &
			$z_-$ &
			$z_-$ &
			1 &
			1 &
			1 &
			1 &
			1 &
			1 &
			1 &
			1 &
			1 &
			$z_+$ &
			$z_+$ &
			$z_+$ &
			$z_+$ &
			$z_+$ &
                                \multicolumn{1}{l|}{2} \\
			\hhline{~~----------------------}
			$S(\wh{K}_5)(A)$ &
			\multicolumn{1}{l|}{} &
			0 &
			$w_-$ &
			$z_-$ &
			$z_-$ &
			$z_-$ &
			$z_-$ &
			$\wh w_-$ &
			1 &
			1 &
			1 &
			1 &
			1 &
			1 &
			1 &
			1 &
			$\wh w_+$ &
			$z_+$ &
			$z_+$ &
			$z_+$ &
			$z_+$ &
			$w_+$ &
                                \multicolumn{1}{l|}{2} \\
			\hhline{~~----------------------}
			$S(K_6)$ &
			\multicolumn{1}{l|}{} &
			0 &
			$z_-$ &
			$z_-$ &
			$z_-$ &
			$z_-$ &
			$z_-$ &
			1 &
			1 &
			1 &
			1 &
			1 &
			1 &
			1 &
			1 &
			1 &
			$z_+$ &
			$z_+$ &
			$z_+$ &
			$z_+$ &
			$z_+$ &
			2 &
                  \\
                  \hhline{~~---------------------}
		\end{tabular}%
	}
\end{table}
\vspace{1mm}
Again, only the number pendant paths and the length of the
partitions are relevant.  Therefore, the partitions
$A=\lMset 2, 3 \rMset$ and $B=\lMset 1, 4 \rMset$ give isospectral
graphs $S(\wh{K}_5)(A)$ and $S(\wh{K}_5)(B)$ which are not
isomorphic because the degree lists are different:
\begin{equation*}
  \deg S(\wh{K}_5)(A)=(\underbrace{2,\dots,2}_{16}, 3, 5,5,5,5,5)
  \qquadtext{and}
  \deg S(\wh{K}_5)(B)=(1,\underbrace{2,\dots,2}_{15}, 4, 5,5,5,5,5)
\end{equation*}

\begin{example}
  In a similar vein, the discrete fuzzy bipartite graphs below are
  also isospectral and, clearly, non-isomorphic using the edge
  subdivision for all the edges on the graph (see~
  \Fig{subdivision2.ex}).
  \begin{figure}[h]
    \centering {
      \begin{tikzpicture}[auto, vertex/.style={circle,draw=black!100,%
          fill=black!100, 
          inner sep=0pt,minimum size=1mm},scale=1] %
        \foreach \r in {1,2,3,4,5} %
        { %
          \node (C\r) at (0,\r/2) [vertex,
          label=below:] {}; %
          \node (b\r) at (1,\r/2) [vertex,label=below:] {};%
          \node (f\r) at (-1,\r/2+.125) [vertex,label=below:] {};%
          \node (g\r) at (-1,\r/2-.125) [vertex,label=below:] {};%

          \path [-](C\r) edge node[right] {} (b\r);%
          \path [-](f\r) edge node[right] {} (C\r);%
          \path [-](g\r) edge node[right] {} (C\r);%
        }%
        \foreach \s in {2,7} %
        { %
          \node (E\s) at (2,\s/4+.25) [vertex,
          label=below:] {}; %
        }
        \foreach \r in {3,4,5}  {\path [-](b\r) edge node[right] {} (E7);}
        \foreach \r in {1,2}  {\path [-](b\r) edge node[right] {} (E2);}

        \foreach \p in {2,4} %
        { %
          \node (D\p) at (-2,\p/2) [vertex,
          label=below:] {}; %
        }%

        \foreach \r in {1,2,3,4,5} %
        { %
          \foreach \p in {2} %
          { %
            \path [-](g\r) edge node[right] {} (D\p);%
          }%
          \foreach \p in {4} %
          { %
            \path [-](f\r) edge node[right] {} (D\p);%
          }%
        }%
        \draw[] (0.35,0)node[left] {\small$S(\wh{K}_{2,5}(A))$};
      \end{tikzpicture}
      \quad \quad \quad
      \begin{tikzpicture}[auto, vertex/.style={circle,draw=black!100,%
          fill=black!100, 
          inner sep=0pt,minimum size=1mm},scale=1] %
        \foreach \r in {1,2,3,4,5} %
        { %
          \node (C\r) at (0,\r/2) [vertex,
          label=below:] {}; %
          \node (b\r) at (1,\r/2) [vertex,label=below:] {};%
          \node (f\r) at (-1,\r/2+.125) [vertex,label=below:] {};%
          \node (g\r) at (-1,\r/2-.125) [vertex,label=below:] {};%

          \path [-](C\r) edge node[right] {} (b\r);%
          \path [-](f\r) edge node[right] {} (C\r);%
          \path [-](g\r) edge node[right] {} (C\r);%
        }%
        \foreach \s in {2,7} %
        { %
          \node (E\s) at (2,\s/4) [vertex,
          label=below:] {}; %
        }
        \foreach \r in {2,3,4,5}  {\path [-](b\r) edge node[right] {} (E7);}
        \foreach \r in {1}  {\path [-](b\r) edge node[right] {} (E2);}

        \foreach \p in {2,4} %
        { %
          \node (D\p) at (-2,\p/2) [vertex,
          label=below:] {}; %
        }%

        \foreach \r in {1,2,3,4,5} %
        { %
          \foreach \p in {2} %
          { %
            \path [-](g\r) edge node[right] {} (D\p);%
          }%
          \foreach \p in {4} %
          { %
            \path [-](f\r) edge node[right] {} (D\p);%
          }%
        }%
        \draw[] (0.35,0)node[left] {\small$S(\wh{K}_{2,5}(B))$};
      \end{tikzpicture}

    } \caption{ Examples of isospectral graphs
      $S(\wh{K}_{2,5}(A))$ and $S(\wh{K}_{2,5}(B))$, obtained
      by edge subdivision of the fuzzy complete bipartite construction
      $\wh{K}_{2,5}(A)$ and $\wh{K}_{2,5}(B)$ for $A=(4,1)$
      and $B=(2,3)$.  }
    \label{fig:subdivision2.ex}
  \end{figure}
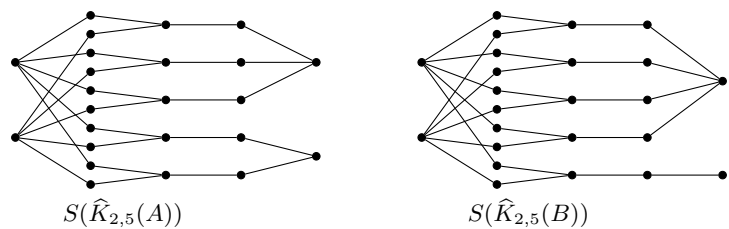

\end{example}

Finally, as in \Cor{metric-1}, we can extend isospectrality to the
metric graph scenario since the subdivision graphs are connected, and the
number of edges and vertices are preserved in the merging along the
different partitions. Hence all the graphs constructed in this section
are also isospectral as equilateral metric graphs for the Kirchhoff Laplacian.

%
%


\begin{thebibliography}{CDGT88}

\bibitem[B85]{baumg:85} H.~Baumg\"artel, \emph{Analytic Perturbation
    Theory for Matrices and Operators}, Birkh\"auser, Basel, 1985.

\bibitem[BK13]{berko-kuch:13} G.~Berkolaiko and P.~Kuchment,
  \emph{Introduction to Quantum Graphs}, American Mathematical
  Society, Providence, 2013.

\bibitem[Bu15]{butler:15} S.~Butler, \emph{Using twins and scaling to
    construct cospectral graphs for the normalized {L}aplacian},
  Electron. J. Lin. Alg. \textbf{28} (2015) 54-68.

\bibitem[Bu10]{butler:10} \bysame, \emph{A note about cospectral
    graphs for the adjacency and normalized Laplacian matrices.}
  Lin. Mult. Alg. \textbf{58} (3-4) (2010) 387-390.

\bibitem[BuG11]{butler-grout:11} S.~Butler and J.~Grout, \emph{A
    construction of cospectral graphs for the normalized {L}aplacian},
  Electron. J. Combin. \textbf{18} (2011), Paper 231, 20.

\bibitem[BrH12]{brouwer-haemers:12} A.~E. Brouwer and W.~H. Haemers,
  \emph{Spectra of graphs}, Universitext, Springer, New York, 2012.

\bibitem[CDHLPS04]{chen:04} G.~Chen, G.~Davis, F.~Hall, Z.~Li,
  K.~Patel and M.~Stewart, \emph{An interlacing result on normalized
    {L}aplacians}, J. Discr. Math. \textbf{18} (2004), 353--361.

\bibitem[CDGT88]{cddgt:88} D.~M. Cvetkovi{\'c}, M.~Doob, I.~Gutman,
  and A.~Torga{\v{s}}ev, \emph{Recent results in the theory of graph
    spectra}, Annals of Discrete Mathematics, vol.~36, North-Holland
  Publishing Co., Amsterdam, 1988.

\bibitem[CDS95]{cds:95} D.M.~Cvetkovi'c, M.~Doob, and
  H.~Sachs. \emph{Spectra of graphs.} (Third ed.) Johann Ambrosius
  Barth, Heidelberg, 1995.

\bibitem[Ch97]{chung:97} F.~Chung, \emph{Spectral graph theory}, CBMS
  Regional Conference Series in Mathematics, vol.~92, Published for
  the Conference Board of the Mathematical Sciences, Washington, DC,
  1997.

\bibitem[DH03]{hae:03} E.R.~van Dam and W.H.~Haemers, \emph{Which
    graphs are determined by their spectrum?} Linear
  Alg. Appl. \textbf{373} (2003) 241--272.

\bibitem[DP16]{das:16} A.~Das and P.~Panigrahi, \emph{Normalized
    Laplacian spectrum of some subdivision-coronas of two regular
    graphs}, Lin. Mult. Alg. \textbf{65} (2017) 962-972.

\bibitem[FLP18]{fclp:18} J.S. Fabila-Carrasco, F.~Lled\'{o}, and
  O.~Post, \emph{Spectral gaps and discrete magnetic {L}aplacians},
  Lin. Alg. Appl. \textbf{547} (2018) 183-216.


\bibitem[FLP22a]{fclp:22a}
  \bysame, \emph{Spectral preorder and perturbations of discrete
    weighted graphs}, Math. Ann. \textbf{382} (2022) 1775-1823.

\bibitem[FLP22b]{fclp:22b} \bysame, \emph{Matching number, Hamiltonian
    graphs and magnetic Laplacian matrices},
  Lin. Alg. Appl. \textbf{642} (2022) 86-100.

\bibitem[FLP22c]{fclp:22c} \bysame, \emph{A geometric construction of
    isospectral magnetic graphs}, preprint 2022.

\bibitem[Hey19]{heydari:19} A. Heydari, \emph{The normalized Laplacian
    polynomial of rooted product of graphs}, Discr. Math., Algorithms
  and Appl. \textbf{11} (2019) 1950046 (10 pages).

\bibitem[HoJ13]{horn-johnson:13} R.A. Horn and C.~R. Johnson,
  \emph{Matrix Analysis}, second ed., Cambridge University Press,
  Cambridge, 2013.

\bibitem[K95]{kato:95} T.~Kato, \emph{Perturbation Theory for Linear
    Operators}, Springer, Berlin, 1995.

\bibitem[KM21]{kurasov-muller:21} P.~Kurasov and J. Muller, \emph{On
    isospectral metric graphs}, preprint 2021; arXiv:2112.04230


\bibitem[LP07]{lledo-post:07} F.~Lled\'o and O.~Post, \emph{Generating
    spectral gaps by geometry}, in \emph{Prospects in Mathematical
    Physics}, C. Zambrini \emph{et al.} eds, Contemporary Mathematics,
  \textbf{437} (2007) 159-169.

\bibitem[LP08a]{lledo-post:08b} \bysame, \emph{Eigenvalue bracketing
    for discrete and metric graphs}, J. Math. Anal. Appl. \textbf{348}
  (2008) 806-833.

\bibitem[L22]{lorenzen:22} K.~Lorenzen, \emph{Cospectral constructions
    for several graph matrices using cousin vertices}, Spec. Matrices
  \textbf{10} (2022) 9-22.

\bibitem[So99]{so:99} W.~So, \emph{Rank one perturbation and its
    application to the laplacian spectrum of a graph},
  Lin. Mult. Alg. \textbf{46} (1999) 193-198.

\end{thebibliography}

\providecommand{\bysame}{\leavevmode\hbox to3em{\hrulefill}\thinspace}
\providecommand{\MR}{\relax\ifhmode\unskip\space\fi MR }
\providecommand{\MRhref}[2]{%
  \href{http://www.ams.org/mathscinet-getitem?mr=#1}{#2} }
\providecommand{\href}[2]{#2}

\end{document}